\newtheorem{theorem}{Theorem}[section]
\newtheorem{lemma}[theorem]{Lemma}
\newtheorem{corollary}[theorem]{Corollary}
\theoremstyle{definition}
\newtheorem*{definition}{Definition} 
\theoremstyle{remark}
\renewcommand{\pm}{\text{pm}}
\renewcommand{\th}{^{\text{th}}}
\newcommand{\ex}[2]{\text{ex}\!\left(#1, #2\right)}
\newcommand{\gex}[2]{\text{gex}\!\left(#1, #2\right)}
\newcommand{\hyperex}[3]{\text{ex}_{#1}\!\left(#2, #3\right)}
\title{\vspace{-3ex}Improved bounds on the extremal function of hypergraphs}
\author{William Zhang}
\date{}
\begin{document}
\maketitle

\begin{abstract}
A fundamental problem in pattern avoidance is describing the asymptotic behavior of the extremal function and its generalizations. 
We prove an equivalence between the asymptotics of the graph extremal function for a class of bipartite graphs and the asymptotics of the matrix extremal function.
We use the equivalence to prove several new bounds on the extremal functions of graphs.
We develop a new method to bound the extremal function of hypergraphs in terms of the extremal function of their associated multidimensional matrices, improving the bound of the extremal function of $d$-permutation hypergraphs of length $k$ from $O(n^{d-1})$ to $2^{O(k)}n^{d-1}$.
\end{abstract}

\setlength\abovedisplayskip{2pt}
\setlength\belowdisplayskip{0pt}

\section{Introduction}

\subsection{Background}

Problems in the extremal theory of pattern avoidance often ask for the number of elements a structure can have without containing a specific substructure \cite{GenesonThesis}.
A fundamental problem in pattern avoidance is describing the asymptotic behavior of the extremal function and its generalizations \cite{GunbyPalv}.
According to Kitaev \cite{kitaev2011patterns}, the introduction to the area of permutation patterns is traditionally attributed to Knuth \cite{knuth1998art}.
One of the largest problems in the study of pattern avoidance is the Stanley-Wilf conjecture, formulated by Stanley and Wilf independently in the late 1980s \cite{bona1997}, which states that the growth rate of the number of permutations avoiding a given permutation pattern is exponential.
The F\"{u}redi-Hajnal conjecture \cite{furedi1992davenport} states that the growth rate of the extremal function of permutation matrices is linear.
Klazar \cite{klazar2000furedi} showed an equivalence between the Stanley-Wilf conjecture and the F\"{u}redi-Hajnal conjecture.
The constants in $2^{O(n)}$ and $O(n)$ from the Stanley-Wilf and F\"{u}redi-Hajnal conjectures are called the \textit{Stanley-Wilf limit} and the \textit{F\"{u}redi-Hajnal limit}, respectively.
Cibulka \cite{cibulka2009constants} later proved a polynomial relationship between the Stanley-Wilf limit and the F\"{u}redi-Hajnal limit of a permutation.
Marcus and Tardos \cite{MarcTard} proved the F\"{u}redi-Hajnal conjecture, and thus the Stanley-Wilf conjecture. Marcus and Tardos's work have since then been generalized in various directions, and many others have significantly sharpened their bounds \cite{fox2013stanley, KlazarMarcus, CibKyncl, TianGeneson, MarcThesis, WeidertThesis, GunbyPalv}.
Following the Marcus-Tardos Theorem, Fox \cite{fox2013stanley} proved exponential upper and lower bounds on the Stanley-Wilf limit, disproving a widely believed conjecture that the Stanley-Wild limit was quadratic in the length of its permutation \cite{steingrimsson2013some}.

This problem has been extensively studied both out of mathematical interest and due to its applications in computational geometry and engineering. 
Mitchell's algorithm \cite{Mitchell} computes a  shortest
rectilinear path avoiding rectilinear obstacles in the plane.
Mitchell showed that the complexity of the algorithm can be bounded by the extremal function of a specific matrix.
Bienstock and Gy\"{o}ri \cite{Bienstock} bounded the complexity of the algorithm by finding sharp upper bounds on the extremal function of that matrix.
Mitchell's algorithm has direct applications in both motion planning in robotics and wire routing in VLSI circuit design \cite{yang1992rectilinear}. 
Furthermore, 
F\"{u}redi \cite{furedi1990} used the extremal function to find an upper bound on the Erd\H{o}s-Moser problem \cite{Erdos} of determining the maximum number of unit distances in a convex polygon.
Aggarwal \cite{aggarwal2015unit} sharpened F\"{u}redi's result on the upper bound on the maximum number of unit distances in a convex polygon.
Some problems from pattern avoidance also emerged in bounding the number of possible lower envelope sequences formed by continuous functions \cite{sharir1995davenport}.

\subsection{Definitions}

%-------------------------------------------------------------
%                               Definitions
%-------------------------------------------------------------

We denote the list $\{1, \ldots, n\} = [n]$.

For some integer $d \geq 2$, a $d$-dimensional matrix is a \textit{block} of numbers on $[n_1]\times \cdots \times [n_d]$.
We denote such a  matrix $A =  (A_{(i_1, \ldots, i_d)})$ where $1\leq i_l \leq n_l$ for each $l\in [d]$.
In this paper we only consider binary matrices, so every entry is a \textit{0-entry} or a \textit{1-entry}.
When  we refer to a $d$-dimensional matrix $A$ having a side length $k$,  we mean that $A$ is a block of numbers on $[k]^d$.  We also refer to a $d$-dimensional matrix as a \textit{$d$-matrix.} We denote the number of $1$-entries in a $d$-matrix $A$ by $w(A)$.

An \textit{$l$-cross section} of matrix $A$ is the set of all the entries 
$A_{(i_1, \ldots, i_d)}$ whose $l\th$ coordinates have the same value.
An \textit{$l$-row} of matrix $A$ is the collection of all the entries $A_{(i_1, \ldots, i_d)}$ whose coordinates other than the $l\th$ coordinate have fixed values.

\begin{sloppypar}
\begin{definition}
Let $\pi_1, \ldots, \pi_{d-1}$ be permutations on $[k]$. Then the matrix $A$ that is defined by
$A_{(i, \pi_1(i), \ldots, \pi_{d-1}(i))} = 1$ for each $i \in [d-1]$ and 0-entries everywhere else is the \textit{$d$-permutation matrix} of length $k$ \textit{constructed} from $\pi_1, \ldots, \pi_{d-1}$. A \textit{permutation matrix} refers to a $2$-permutation matrix.
\end{definition}
\begin{definition}
If $A$ and $B$ have the same dimensions and $B$ can be obtained from $A$ by changing 1-entries in $A$ into 0's, then $A$ \textit{represents} B.
\end{definition}
\end{sloppypar}

If some submatrix of $A$ represents $B$, then $A$ \textit{contains} $B$. Otherwise, $A$ avoids $B$. The extremal function $\ex{B}{n}$ denotes the maximum possible number of 1-entries in an $n\times n$ binary matrix that avoids $B$. We call $B$ \textit{linear} if $\ex{B}{n} = \Theta(n)$.
For $d$-dimensional matrices, $f(B, d, n)$ denotes the maximum possible number of 1-entries in a $d$-matrix of side length $n$ that avoids $B$.

An \textit{ordered hypergraph} is an ordered pair $H = (V, E)$ where $V$ is a linearly ordered set and $E$ is a set of subsets of $V$. Each $v\in V$ is a \textit{vertex} of $H$, and each $e\in E$ is an \textit{edge} of $H$. The \textit{weight} of a hypergraph $H = (V, E)$ is $\sum_{e\in E}|e|$.
For $d\in \mathbb{N}$, a hypergraph $H = (V, E)$ is $d$-uniform if for each $e\in E$,we have $|e| = d$. An \textit{ ordered graph} is a 2-uniform ordered hypergraph.
Because this paper does not deal with unordered graphs and unordered hypergraphs,we refer to ordered graphs and ordered hypergraphs as just graphs and hypergraphs.

\begin{definition}
A hypergraph $A = (V_1, E_1)$ \textit{contains} another hypergraph $B = (V_2, E_2)$ if there exists an increasing injection $f: V_2 \mapsto V_1$ and an injection $g: E_2 \mapsto E_1$ such that for each $e \in E_2$,we have that $f(e)\subset g(e)$. Otherwise, $A$ \textit{avoids} $B$. If $f$ and $g$ are bijections such that $f(e) = g(e)$ for each $e \in E_2$, then $A$ and $B$ are \textit{order-isomorphic}.
\end{definition}

If $G$ is a graph, the extremal function for graphs $\gex{G}{n}$ denotes the maximum possible number of edges in a graph with $n$ vertices such that $A$ avoids $G$. Analogously,we associate two extremal functions for hypergraphs. If $H$ is a hypergraph, then $\hyperex{e}{H}{n}$ denotes the maximum possible number of edges of a hypergraph on $[n]$ that avoids $H$, and $\hyperex{i}{H}{n}$ denotes the maximum possible weight of a hypergraph on $[n]$ that avoids $H$.

\begin{definition}
Let $H = (V, E)$ be a hypergraph with $V = \{v_1,\ldots, v_n\}$ such that $v_1 < \cdots < v_k$. Ifwe can partition $V$ into $d$ sets $I_i = \{v_{k_{i - 1} + 1}, \ldots, v_{k_i}\}$ for $i\in [d]$ with $k_0 = 0$ and $k_d = n$
such that each $e\in E$ contains at most one vertex from each $I_i$, then $H$ is \textit{$d$-partite}. Each $I_i$ is a \textit{part} of $H$.
\end{definition}

\begin{definition}
Given a $d$-dimensional matrix $M$ with dimensions $[n_1]\times \cdots \times [n_d]$, the hypergraph \textit{associated} with $M$ is $H = ([\sum_{i = 1}^{d} n_i], E)$ where for each 1-entry $m_{k_1, \ldots, k_d}$, $E$ contains the edges $\{(\sum_{i = 1}^{j - 1}m_i) + k_j\}_{j = 1}^{d}$. Conversely, for a $d$-partite, $d$-uniform hypergraph $H'$, the $d$-matrix associated with $H'$ is the $d$-matrix $M'$ such that $H'$ is associated with $M'$.
\end{definition}

We see that if $M$ has side length $n$, then $H$ is a $d$-partite graph on $nd$ vertices with each part of size $n$.
A \textit{$d$-permutation hypergraph} of length $k\in \mathbb{Z}^+$ is a  $d$-uniform, $d$-partite hypergraph $H = ([kd], E)$ with parts of size $k$ such that each vertex $v\in [kd]$ is in exactly one edge.
Similarly, a \textit{permutation graph of length $k$} is the graph associated with a $2$-permutation matrix.
We see that every $d$-permutation hypergraph is the hypergraph associated with a $d$-permutation matrix, and vice versa.
Klazar and Marcus \cite{KlazarMarcus} observed that if $G$ and $G$ are $d$-partite, $d$-uniform hypergraphs with $nd$ vertices and parts of size $n$, then $G$ contains $H$ if and only if the the matrix associated with $G$ contains the matrix associated $H$.

\begin{definition}
In a $d$-matrix $P$, the \textit{distance vector} between entries $P_{(a_1, \ldots, a_d)}$ and $P_{(b_1, \ldots, b_d)}$ is $(b_1 - a_1, \ldots, b_d - a_d) \in \mathbb{Z}^d$.
A vector $\textbf{x}\in \mathbb{Z}^d$ is \textit{$r$-repeated} in a permutation matrix $P$ if $\textbf{x}$ occurs as the distance vector of at least $r$ pairs of 1-entries.
\end{definition}
%------------------------------This stuff is commented out --------------------------
\begin{comment} 

A \textit{set partition} of $n$ is a partition of $[n]$ into sets. We write a set partition into $k$ parts in the form $T_1 / \cdots / T_k$
where the disjoint union of $T_1,\ldots, T_k$ is $n$.
\begin{definition}
Let $\sigma_1, \ldots, \sigma_d$ be permutations $[n] \mapsto [n]$. We define the \textit{set partition correspondent
to $(\sigma_1, \ldots, \sigma_d)$} to be the partition $T_1/\cdots/T_n$ of $(d + 1)n$ such that 
$T_i = \{i, n + \sigma_1(i), 2n + \sigma_2(i), \ldots, dn + \sigma_d(i)\}.$
\end{definition}
A set partition $T_1 / \cdots / T_l$ of $n$ \textit{contains} a set partition $T_1'/\cdots/T_m'$ of k in the if there exists an injection $f: [k]\mapsto [n]$ and an injection $g: [m']\mapsto [$ such that for each $i\in [m]$,we have $f(T_i')\in $
\begin{definition}
The \textit{permutability} of a set partition $\pi$, which we will call $\pm(\pi)$, is the minimum
$d$ such that there exists a $d$-tuple of permutations $(\sigma_1, \ldots , \sigma_d)$ such that the correspondent  partition $[\sigma_1, \ldots, \sigma_d]$ contains $\pi$. 
\end{definition}

\end{comment}

%----------------------------------End of Comment ------------------------------------
\subsection{New Results}\label{new_results_outline}

We prove several new bounds on the extremal functions of graphs and multidimensional matrices using techniques from the extremal theory of matrices, probability, and analysis.
We also develop new methods for bounding the extremal function of hypergraphs in terms of the extremal function of multidimensional matrices.

In section~\ref{graph_extensions}, we prove an equivalence between the asymptotics of the graph extremal function for a class of bipartite graphs and the asymptotics of the matrix extremal function. 
We use the equivalence as well as upper bounds obtained from Cibulka and Kyn\u{c}l \cite{CibKyncl} to prove that $\gex{P}{n} \leq \frac{8}{3}(k+2)^2 2^{4(k+1)}n$ for all permutation graphs $P$ of length $k$.
We use the equivalence to improve the known upper bound for $j$-tuple permutation graphs to $\gex{P}{n} = 2^{O(k)}n$.
The previous bound proven by Weidert \cite{WeidertThesis} was $\gex{P}{n} =2^{O(k\log{k})}n$.
We also generalize the upper bound $2^{O(k^{2/3}(\log k)^{7/3})/(\log \log k)^{1/3}}n$ for the extremal function of almost all permutations matrices \cite{CibKyncl} to the extremal function of almost all permutation graphs.

In section~\ref{hypergraph_extensions}, we generalize the upper bound on graphs in Lemma~\ref{graph_upperbound} to hypergraphs.
For a $d$-permutation hypergraph $P$ of length $k$, we improve the bound $\hyperex{i}{P}{n} = O(n^{d-1})$ obtained by Gunby and P{\'a}lv{\"o}lgyi \cite{GunbyPalv} to $\hyperex{i}{P}{n} = 2^{O(k)}n^{d-1}$.
This also generalizes Geneson and Tian's result \cite{TianGeneson} that $f(Q, d, n) = 2^{O(k)}n^{d-1}$, where $Q$ is a $d$-permutation matrix of length $k$. 
We also sharpen Lemma 7.1 of \cite{GunbyPalv} by bounding the number of hypergraphs avoiding a given $d$-permutation hypergraph to $2^{2^{O(k)}n}$.
Furthermore, our proof extends to when $P$ is the hypergraph associated with a $j$-tuple $d$-permutation matrix of length $k$.

In section~\ref{lowerbounds}, we use the probabilistic method to derive lower bounds for the extremal functions mentioned in this paper.
We generalize a lower bound of a completely filled matrix \cite{TianGeneson} to a lower bound on arbitrary matrices and graphs.
Crowdmath \cite{CrowdMath} proved that for an $r\times c$ binary matrix $B$, if it has more than $r + c - 1$ one entries, then $\ex{B}{n} = \Omega(n\log n)$.
We use the new lower bound to show that if $B$ has more than $r + c - 1$ one entries, then $\ex{B}{n} = \Omega(n^{1 + \epsilon})$ for some $\epsilon > 1$.
We also generalize this lower bound to arbitrary hypergraphs.
Furthermore, we use the lower bounds  for $f(P, d, n)$ for $d$-permutation matrices \cite{TianGeneson} to find lower bounds on $\hyperex{i}{Q}{n}$ where $Q$ is a $d$-permutation hypergraph.
This lower bound shows that our upper bound for the hypergraph extremal function of $d$-permutations is tight up to a constant dependent on $d$.

\section{Equivalence of graph and matrix extremal functions}\label{graph_extensions}

In this entire section, unless otherwise stated, let $P$ be a matrix on $[k_1]\times [k_2]$ with a 1-entry at $P_{(k_1, 1)}$.
Let $Q$ be the graph associated with $P$.
\begin{theorem}\label{graph_equivalence_theorem}
$\gex{Q}{n} \sim \ex{P}{n}$.
\end{theorem}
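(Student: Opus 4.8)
The plan is to match both sides to a single intermediate quantity: the maximum number $g(n)$ of $1$-entries in an $n\times n$ binary matrix that avoids $P$ and has all of its $1$-entries strictly above the main diagonal. First I would set up the dictionary between ordered graphs and strictly upper triangular matrices. For an ordered graph $H$ on $[n]$ let $B(H)$ be the $n\times n$ matrix with $B(H)_{(a,b)}=1$ exactly when $a<b$ and $\{a,b\}\in E(H)$; this is a bijection between ordered graphs on $[n]$ and strictly upper triangular $0/1$-matrices, and it preserves cardinality. The only place the hypothesis $P_{(k_1,1)}=1$ enters is in the claim that $H$ contains $Q$ if and only if $B(H)$ contains $P$: a copy of $P$ in $B(H)$ at rows $r_1<\dots<r_{k_1}$ and columns $c_1<\dots<c_{k_2}$ has $B(H)_{(r_{k_1},c_1)}=1$, which forces $r_{k_1}<c_1$ and hence $r_1<\dots<r_{k_1}<c_1<\dots<c_{k_2}$; reading off $f(i)=r_i$ on the first part of $Q$ and $f(k_1+j)=c_j$ on the second part gives a copy of $Q$ in $H$, and the reverse implication is the same computation run backwards (and does not use the hypothesis). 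Consequently $\gex{Q}{n}=g(n)$, and since a strictly upper triangular $P$-avoiding matrix is in particular an $n\times n$ $P$-avoiding matrix, $\gex{Q}{n}=g(n)\le\ex{P}{n}$. This is the easy half of $\sim$.

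For the reverse bound I need $g(n)\ge(1-o(1))\ex{P}{n}$. Two preliminary facts: (a) a smoothing estimate — deleting a sparsest row and a sparsest column of an extremal matrix gives $\ex{P}{n-1}\ge(1-2/n)\ex{P}{n}$, which telescopes to $\ex{P}{(1-\delta)n}\ge(1-O(\delta))\ex{P}{n}$, so it suffices, for each fixed $\delta>0$, to produce a strictly upper triangular $P$-avoiding matrix of size roughly $n$ with at least $\ex{P}{(1-\delta)n}$ ones; and (b) the corner hypothesis forces $P$ to be \emph{indecomposable}, i.e.\ $P$ is not a direct sum $P_1\oplus P_2$ of two nonempty matrices, since the entry $P_{(k_1,1)}$ would lie in a block that such a decomposition makes zero. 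Fact (b) also gives super-additivity of $\ex{P}{\cdot}$ (block-diagonal placement of two extremal matrices) and of $\gex{Q}{\cdot}$ (disjoint union of two extremal $Q$-avoiding graphs), so both functions have limits $\lim\ex{P}{n}/n$ and $\lim\gex{Q}{n}/n$ in $(0,\infty]$.

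The construction: take an extremal $N\times N$ $P$-avoiding matrix $A$ and, inside an $(M+1)N\times(M+1)N$ grid, place $M$ translated copies of $A$, the $\ell$-th copy occupying rows $[(\ell-1)N+1,\ell N]$ and columns $[\ell N+1,(\ell+1)N]$. Every copy lies strictly above the diagonal (consecutive copies merely reuse a block of $N$ indices as the columns of one copy and the rows of the next), so the resulting matrix $B$ is strictly upper triangular with $w(B)=M\cdot\ex{P}{N}$. If $B$ contained $P$, then classifying each $1$-entry of the pattern by which copy it lies in — and using that $P$ has no all-zero row or column — shows the $1$-entries must all lie in one copy unless $P$ splits as a direct sum, contradicting indecomposability; hence $B$ avoids $P$. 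Thus $g((M+1)N)\ge M\,\ex{P}{N}$, and letting $M\to\infty$ gives $\sup_n g(n)/n\ge\ex{P}{N}/N$ for every $N$; together with $g(n)\le\ex{P}{n}$ and the two super-additivity statements this forces $\lim g(n)/n=\lim\ex{P}{n}/n$, which yields $\gex{Q}{n}\sim\ex{P}{n}$ whenever this common limit is finite.

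The hard part is the superlinear case, where a staircase of many small copies loses a constant factor: there one instead embeds the extremal matrix of size $(1-\delta)n$ into an $n\times n$ grid almost entirely above the diagonal, by duplicating rows and columns to create $\delta n$ units of slack, chopping the matrix along the diagonal into $O(1/\delta)$ bands, and discarding only the $1$-entries that lie far below the diagonal; the delicate point is to show a near-extremal $P$-avoiding configuration may be chosen concentrated near the diagonal so that this discard is $o(\ex{P}{n})$, and once again it is the indecomposability coming from $P_{(k_1,1)}=1$ that keeps the banded reassembly $P$-avoiding. Finally one must dispose of the degenerate cases in which $P$ has an all-zero row or column: these correspond to isolated vertices of $Q$, and one handles them by a direct reduction deleting those vertices from $Q$ together with the corresponding zero lines from $P$.
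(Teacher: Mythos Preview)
Your argument in the case where $\ex{P}{n}/n$ has a finite limit is correct and is essentially the paper's proof. Your upper bound $\gex{Q}{n}=g(n)\le\ex{P}{n}$, obtained from the dictionary between ordered graphs on $[n]$ and strictly upper-triangular $n\times n$ matrices, is exactly Lemma~\ref{graph_upperbound}. Your staircase construction, placing $M$ translates of an extremal $N\times N$ $P$-avoiding matrix in the superdiagonal blocks of an $(M{+}1)N\times(M{+}1)N$ grid to obtain $g((M{+}1)N)\ge M\,\ex{P}{N}$, is exactly Lemma~\ref{graph_larger_matrix}, which the paper phrases in graph language (it lays $t-1$ copies of the bipartite graph associated to an extremal $P$-avoiding matrix on consecutive blocks of $n$ vertices inside $[nt]$). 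The paper then invokes the finite F\"uredi--Hajnal limit $c_P=\lim_n\ex{P}{n}/n$ to conclude, just as you do. Your passage through indecomposability is correct but an unnecessary generalisation: both the paper and your own dictionary argument already use the bottom-left $1$-entry directly, and that entry alone forces any embedded copy of $P$ in the staircase to lie in a single block.

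Where you go beyond the paper is the superlinear case, and that part is not a proof. The step you yourself call the ``delicate point'' --- that a near-extremal $P$-avoiding configuration may be chosen concentrated near the diagonal, so that discarding the entries far below the diagonal costs only $o(\ex{P}{n})$ --- is asserted without any argument, and there is no evident mechanism that would force it for general $P$ with a bottom-left $1$. Note, however, that the paper does not handle this case either: its proof, like yours, only establishes $\gex{Q}{n}\sim\ex{P}{n}$ under the (tacitly used) hypothesis that $\ex{P}{n}$ is linear.
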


We generalize Corollary 2.2.9 from \cite{MarcThesis}.
\begin{lemma}\label{graph_upperbound}
For all $n\in \mathbb{Z}^+$ $\gex{Q}{n} \leq \ex{P}{n}$.
\end{lemma}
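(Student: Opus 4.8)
The plan is to convert an extremal $Q$-avoiding graph into a $P$-avoiding matrix of the same weight. Let $G$ be a graph on vertex set $[n]$ with $\gex{Q}{n}$ edges that avoids $Q$. Build the $n\times n$ binary matrix $M$ by setting $M_{(i,j)} = 1$ exactly when $i < j$ and $\{i,j\}\in E(G)$, and $M_{(i,j)} = 0$ otherwise. Since our graphs have no loops and every edge $\{i,j\}$ with $i<j$ produces precisely the single $1$-entry $M_{(i,j)}$, we get $w(M) = |E(G)| = \gex{Q}{n}$. If $M$ avoids $P$, then $w(M)\le \ex{P}{n}$ and the lemma follows, so the whole content is to show that $M$ avoids $P$.

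I would prove this by contraposition, along the lines of the permutation case in Corollary 2.2.9 of \cite{MarcThesis}. Suppose $M$ contains $P$; then there are rows $r_1 < \cdots < r_{k_1}$ and columns $c_1 < \cdots < c_{k_2}$ of $M$ with $M_{(r_a, c_b)} = 1$ for every index pair $(a,b)$ such that $P_{(a,b)} = 1$. Recall that, as the graph associated with $P$, the hypergraph $Q$ has vertex set $[k_1 + k_2]$ and an edge $\{a,\,k_1+b\}$ for each $1$-entry $P_{(a,b)}$. Define $f\colon [k_1+k_2]\to[n]$ by $f(a) = r_a$ for $a\in[k_1]$ and $f(k_1+b) = c_b$ for $b\in[k_2]$, and define $g$ on $E(Q)$ by sending the edge associated with $P_{(a,b)}$ to $\{r_a, c_b\}$. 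Because $M_{(r_a,c_b)} = 1$ we have $\{r_a,c_b\}\in E(G)$, so $g$ maps into $E(G)$ and $f(e) = g(e)$ for every $e\in E(Q)$; injectivity of $g$ then follows from the injectivity of the row- and column-index selections, once we know that every chosen row index lies below every chosen column index, so that the two endpoints of an image edge cannot be interchanged.

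The one place the hypothesis $P_{(k_1,1)} = 1$ is used — and the step I expect to be the crux — is checking that $f$ is an \emph{increasing} injection. It is increasing on $\{1,\dots,k_1\}$ and on $\{k_1+1,\dots,k_1+k_2\}$ separately, so the only inequality left is $f(k_1) < f(k_1+1)$, that is, $r_{k_1} < c_1$. Here $P_{(k_1,1)} = 1$ forces $M_{(r_{k_1}, c_1)} = 1$, and $M$ has no $1$-entry on or below the diagonal, so indeed $r_{k_1} < c_1$; hence $r_a \le r_{k_1} < c_1 \le c_b$ for all the chosen indices, which both makes $f$ increasing on all of $[k_1+k_2]$ and secures the injectivity of $g$ used above. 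With $f$ an increasing injection, $g$ an injection, and $f(e) = g(e)$ for each $e\in E(Q)$, the definition of hypergraph containment is satisfied, so $G$ contains $Q$, contradicting the choice of $G$. Therefore $M$ avoids $P$, and $\gex{Q}{n} = w(M)\le \ex{P}{n}$.
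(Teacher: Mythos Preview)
Your proof is correct and follows essentially the same approach as the paper's: encode the $Q$-avoiding graph as an upper-triangular $0$--$1$ matrix, then use the hypothesis $P_{(k_1,1)}=1$ to force $r_{k_1}<c_1$ and hence read off a copy of $Q$ in $G$ from any copy of $P$ in the matrix. Your version is in fact slightly more direct than the paper's, which routes the same argument through the auxiliary bipartite graph on $[2n]$ associated with the matrix, but the substance is identical.
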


\begin{proof}
Let $A = ([n], E)$ be a graph avoiding $Q$. 
Let $B$ be the $n\times n$ matrix defined by $B_{ij} = 1$ if $\{i, j\}\in E$ and $i < j$, and let $C=([2n], E')$ be the graph associated with $B$.
The number of 1-entries in $B$ is $|E|$.
We also have $\{i, j\}\in E$ if and only if $\{i, j+n\}\in E'$.
Suppose for contradiction that $B$ has more than $\ex{P}{n}$ 1-entries. Then $B$ contains $P$.
Let $P'$ be the submatrix of $B$ that represents $P$, where the rows of $P'$ are $\{r_1, \ldots, r_{k_1}\} \subset [n]$ and the columns of $P'$ are $\{c_1, \ldots, c_{k_2}\} \subset [n]$ with $r_1 < \cdots < r_{k_1}$ and $c_1 < \cdots < c_{k_2}$.
Since the bottom-left 1-entry of $P'$ is a 1-entry in $B$, by construction of $B$, we have $r_{k_1} < c_1$. 
Let $B'$ be the graph associated with $P'$, so $Q$ is contained in $B'$, which is contained in $C$.
Let $G = (V, F)$ be the copy of $Q$ in $B'$.
Then $V = \{r_1, \ldots, r_{k_1}, n + c_1, \ldots, n+ c_{k_2}\}$.
If $f$ is the increasing injection from  $\{r_1, \ldots, r_{k_1}, n + c_1, \ldots, n+ c_{k_2}\}$ to $\{r_1, \ldots, r_{k_1}, c_1, \ldots, c_{k_2}\}$, then $f(G) = Q$, so $G$ is order-isomorphic to $Q$. 
Then since $A$ contains $G$, we have that a contains $Q$, contradiction.
\end{proof}

Also note that we can use a symmetrical argument if $P$ has a 1-entry in the top-right corner.

\begin{lemma}\label{graph_larger_matrix}
For all $n, t\in \mathbb{Z}^+$, we have $\gex{Q}{nt} \geq (t-1)\ex{P}{n}$.
\end{lemma}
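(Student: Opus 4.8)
We want to show $\gex{Q}{nt} \geq (t-1)\ex{P}{n}$. The natural approach is to take an $n \times n$ matrix $M$ achieving $\ex{P}{n}$ ones and avoiding $P$, and build from it a graph on $nt$ vertices avoiding $Q$ with $(t-1)\ex{P}{n}$ edges.

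The idea: recall that the extremal matrix $M$ for $P$ yields (via the construction in Lemma \ref{graph_upperbound}, essentially a "bipartite doubling") a $Q$-avoiding graph, but only on roughly $2n$ vertices. To get a factor $(t-1)$, I want to tile $t$ copies of the vertex set $[n]$ consecutively along a line of $nt$ vertices, labeled so block $j$ occupies $\{(j-1)n+1,\ldots,jn\}$ for $j\in[t]$, and place edges between consecutive blocks. Specifically, for each $1$-entry $M_{(a,b)} = 1$ and each $j \in [t-1]$, add the edge $\{(j-1)n + a,\ jn + b\}$ joining block $j$ to block $j+1$. This gives exactly $(t-1) w(M) = (t-1)\ex{P}{n}$ edges, since the blocks are disjoint and each copy of $M$ contributes its full weight. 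I would then verify this graph $G$ avoids $Q$: suppose $G$ contains $Q$ via an increasing injection $f$ on the $k_1+k_2$ vertices of $Q$. The key structural fact is that $Q$ is the graph associated with $P$, which has its "rows" (first $k_1$ vertices, forming part $I_1$) all less than its "columns" (last $k_2$ vertices, part $I_2$); moreover $P$ has a $1$ at $P_{(k_1,1)}$, so there is an edge from the largest row-vertex to the smallest column-vertex. Because every edge of $G$ goes between two \emph{adjacent} blocks, and all $k_1$ row-images of $f$ must be $\leq$ all $k_2$ column-images (as $f$ is increasing and the edge $\{k_1, 1+k_1\}$-type forces the row-part below the column-part), the row-images all lie in one block $j$ and the column-images all lie in block $j+1$ — any edge of $Q$ spans the split between $I_1$ and $I_2$, so both endpoint-blocks must be adjacent, and the row-vertices being a contiguous-in-order lowest group pins them into a single block. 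Restricting $f$ to these two blocks and translating back to coordinates on $[n]\times[n]$ exhibits a copy of $P$ inside $M$ (this is exactly the correspondence of Klazar–Marcus, or a direct unwinding as in Lemma \ref{graph_upperbound}), contradicting that $M$ avoids $P$.

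The main obstacle is the second step: pinning down that a copy of $Q$ cannot "straddle" three or more blocks or sit with its row-part and column-part in the wrong relative position. This requires carefully using (i) that edges only connect adjacent blocks, so the image of any single edge of $Q$ lies in a union of two consecutive blocks, (ii) that $Q$ is connected enough — or at least that the bipartition $(I_1, I_2)$ of $Q$ with all of $I_1$ below all of $I_2$ and an edge between $\max I_1$ and $\min I_2$ — forces, together with monotonicity of $f$, all of $f(I_1)$ into a single block and all of $f(I_2)$ into the next. One clean way: let $j$ be the block containing $f(\max I_1)$; the edge between $\max I_1$ and $\min I_2$ forces $f(\min I_2)$ into block $j-1, j,$ or $j+1$, but $f$ increasing and $\max I_1 < \min I_2$ rules out $j-1$ and (by the construction placing edges only "upward" from block $j$ to $j+1$, with row-coordinates in the lower block) rules out $j$, so $f(\min I_2)$ is in block $j+1$; then all of $f(I_1) \le f(\max I_1)$ and all of $f(I_2) \ge f(\min I_2)$, and since there are no edges skipping a block, every vertex of $I_1$ adjacent to something maps into block $j$ and every vertex of $I_2$ into block $j+1$, with the degenerate (isolated-vertex) cases handled by the injectivity and monotonicity of $f$ alone.

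Once the structure is forced, the final translation is routine: write $f(v) = (j-1)n + \rho(v)$ for $v \in I_1$ and $f(v) = jn + \gamma(v)$ for $v \in I_2$; then $\rho, \gamma$ are increasing injections into $[n]$, and each edge of $Q$ maps to an edge of $G$ within blocks $j, j+1$, which by construction corresponds to a $1$-entry of $M$ in the appropriate position, so $M$ represents $P$ on rows $\rho(I_1)$ and columns $\gamma(I_2)$ — contradiction. Hence $G$ avoids $Q$, and $\gex{Q}{nt} \ge w(G) = (t-1)\ex{P}{n}$.
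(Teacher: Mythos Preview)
Your approach is essentially the same as the paper's: build $G$ on $[nt]$ by placing the bipartite graph associated with an extremal $P$-avoiding $n\times n$ matrix between each pair of consecutive blocks of size $n$, then use the edge $\{k_1,k_1+1\}$ of $Q$ together with the fact that every edge of $G$ joins two consecutive blocks to pin any putative copy of $Q$ into a single pair of adjacent blocks, whence it yields a copy of $P$ in $M$ (equivalently, of $Q$ in the paper's bipartite graph $A$), a contradiction. Your confinement argument is actually more explicit than the paper's rather terse version; the isolated-vertex subtlety you single out is real and is glossed over in the paper as well, but it is harmless in the intended applications (permutation and $j$-tuple permutation matrices), where $P$ has no empty rows or columns.
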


\begin{proof}
Let $A$ be a bipartite graph on $[2n]$ with parts $\{1, \ldots, n\}$ and $\{n+1,\ldots, 2n\}$ that avoids $Q$ with $\ex{P}{n}$ edges. 
Let $I, J\subset [n]$ such that the edges of $A$ are $\{i, n + j\}$ for $i\in I$ and $j\in J$. 
Let $G$ be a graph with vertex set $[nt]$ and edges $\{(k-1)n + i, kn + j\}$ for each $i, \in I, j \in J,$ and $k\in [t-1]$. 
We show that $G$ avoids $Q$.
Define intervals $I_k = [(k-1)n + 1, kn]$ for each $k\in [t]$. 
We see that every edge in $G$ connects vertices in consecutive intervals.

For contradiction, suppose $G' = (V', E')$ is subgraph of $G$ isomorphic to $Q$, so $G'$ must also be bipartite.
Let the parts be $V_1$ and $V_2$.
Suppose $G'$ contains vertices from three intervals $I_{x-1}, I_x, I_{x+1}$.
Let $f: [k_1 + k_2] \mapsto V'$ be the isomorphism from $Q$ to $G'$.
Without loss of generality, suppose $f(\{k_1, k_1 + 1\}) = \{v_1, v_2\}$ such that $v_1 \in I_{x-1}$ and $v_2\in I_x$.
Then $V_1 \subset I_{x - 1}$. Since there are no vertices in $I_{x+1}$ adjacent to any vertices in $I_{x - 1}$, it follows that $V_2\subset I_x$. 
If $G'$ contains vertices from only two different intervals $I_x$ and $I_{x+1}$, then $G'$ is order-isomorphic to a subgraph of $A$, so then $G'$ avoids $Q$.
\end{proof}

Now we prove the main theorem of this section.
\begin{sloppypar}
\begin{proof} 
From \cite{fox2013stanley}, we have $\lim_{n\rightarrow\infty}{\frac{\ex{P}{n}}{n}} = c_P$ for some $c_P\in \mathbb{R}$.
Then $\ex{P}{n} = c_P n + o(n)$.
Then from Lemma~\ref{graph_upperbound} and Lemma~\ref{graph_larger_matrix},
$\ex{P}{n^3} \geq\gex{Q}{n^3} \geq (n^2-1)(c_P n + o(n))$, 
so $\gex{Q}{n^3} = n^3c_P  + o(n^3)$, which implies $\lim_{n\rightarrow\infty}{\frac{\gex{Q}{n}}{n}} = c_P$.
\end{proof}
\end{sloppypar}

Lemma~\ref{graph_upperbound} also has some corollaries that improve known bounds in other problems, specifically, when $P$ is a permutation matrix or a $j$-tuple permutation matrix.

Cibulka and Kyn\u{c}l \cite{CibKyncl} proved  that $\ex{P}{n} \leq \frac{8}{3}(k+1)^2 2^{4k}n$ for all $2$-permutation matrices $P$ of length $k$.
Appending a new row and a new column of $P$ to obtain a $(k + 1)\times (k + 1)$ permutation matrix $P'$ with a 1 in the bottom-left corner results in the following corollary:
\begin{corollary}
For all permutation graphs $Q$ of length $k$, we have $\gex{Q}{n} \leq \frac{8}{3}(k+2)^2 2^{4(k+1)}n$.
\end{corollary}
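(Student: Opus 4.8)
The plan is to reduce to the matrix bound of Cibulka and Kyn\u{c}l via Lemma~\ref{graph_upperbound}, after a small padding trick. Write $Q$ as the graph associated with a $2$-permutation matrix $P_0$ of length $k$; that is, $P_0$ is a $k \times k$ permutation matrix. In general $P_0$ has no $1$-entry in the bottom-left corner, so Lemma~\ref{graph_upperbound} does not apply to $P_0$ directly. Instead I would build a $(k+1) \times (k+1)$ matrix $P'$ by inserting a new first column and a new last row, placing a single $1$-entry at their intersection $P'_{(k+1, 1)}$, and embedding $P_0$ as the top-right $k \times k$ block of $P'$ (so $P'_{(i, j+1)} = (P_0)_{(i,j)}$ for $i, j \in [k]$). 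Each row and each column of $P'$ then contains exactly one $1$-entry, so $P'$ is a permutation matrix of length $k+1$ that has a $1$-entry in its bottom-left corner $P'_{(k+1,1)}$.

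Next I would check that the graph $Q'$ associated with $P'$ contains $Q$. Sending each row-vertex $i$ of $Q$ (with $i \in [k]$) to the row-vertex $i$ of $Q'$, and each column-vertex $k+j$ of $Q$ (with $j \in [k]$) to the column-vertex $k+j+2$ of $Q'$, gives an increasing injection $f \colon [2k] \to [2k+2]$ (the only jump is from $f(k) = k$ to $f(k+1) = k+3$). This map carries the edges of $Q$ to edges of $Q'$, since an edge of $Q$ joins the vertices $i$ and $k+j$ exactly when $(P_0)_{(i,j)} = 1$, i.e. exactly when $P'_{(i,j+1)} = 1$, i.e. exactly when $Q'$ has the edge joining $f(i)$ and $f(k+j)$. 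Hence any graph on $[n]$ avoiding $Q$ also avoids $Q'$, because containment of ordered hypergraphs is transitive; consequently $\gex{Q}{n} \leq \gex{Q'}{n}$.

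Finally, since $P'$ has a $1$-entry in its bottom-left corner and $Q'$ is the graph associated with $P'$, Lemma~\ref{graph_upperbound} gives $\gex{Q'}{n} \leq \ex{P'}{n}$. Applying the Cibulka--Kyn\u{c}l bound $\ex{P}{n} \leq \frac{8}{3}(k+1)^2 2^{4k} n$ to the permutation matrix $P'$ of length $k+1$ yields $\ex{P'}{n} \leq \frac{8}{3}(k+2)^2 2^{4(k+1)} n$, and chaining the three inequalities $\gex{Q}{n} \leq \gex{Q'}{n} \leq \ex{P'}{n} \leq \frac{8}{3}(k+2)^2 2^{4(k+1)} n$ proves the corollary.

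There is no serious obstacle here; the only points requiring care are verifying that the padded matrix $P'$ is still a permutation matrix (so that the Cibulka--Kyn\u{c}l bound applies with parameter $k+1$ rather than $k$) and checking that the relabeling $f$ witnessing $Q \subseteq Q'$ is genuinely increasing and compatible with the bipartition coming from the associated-hypergraph construction.
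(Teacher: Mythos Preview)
Your proposal is correct and follows exactly the paper's own argument: pad the $k\times k$ permutation matrix to a $(k+1)\times(k+1)$ permutation matrix with a $1$ in the bottom-left corner, observe that the associated graph contains $Q$, then apply Lemma~\ref{graph_upperbound} together with the Cibulka--Kyn\u{c}l bound at length $k+1$. Your write-up simply makes explicit the containment $Q\subseteq Q'$ that the paper leaves implicit.
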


Furthermore, this argument also extends the known bound of the graph extremal function of almost all permutation.
Cibulka and Kyn\u{c}l \cite{CibKyncl} also proved that for almost all $k\times k$ permutation matrices that are $r$-repetition free, we have $\ex{P}{n} = 2^{O(r^{1/3}k^{2/3}(\log{k})^2)}n$. If $P$ is $r$-repetition free, then $P'$ is $(r+1)$-repetition free.

\begin{sloppypar} 
\begin{corollary}
For almost all permutation graphs $Q$ with length $k$, we have $\gex{Q}{n} = 2^{O\left((k^{2/3}(\log k)^{7/3})/(\log \log k)^{1/3}\right)}n.$
\end{corollary}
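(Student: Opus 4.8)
The plan is to piggyback on the known matrix bound for almost all permutation matrices via the corner-padding trick already used for the preceding corollary, and then invoke Lemma~\ref{graph_upperbound}. Fix a permutation graph $Q$ of length $k$ and let $P$ be the $k\times k$ permutation matrix with which it is associated. Since permutation graphs of length $k$, $k\times k$ permutation matrices, and permutations of $[k]$ are in bijection, the phrase ``almost all permutation graphs of length $k$'' means exactly ``almost all $k\times k$ permutation matrices.'' Adjoin a new bottom row and a new leftmost column to $P$ so that the resulting $(k+1)\times(k+1)$ permutation matrix $P'$ has a $1$-entry in the bottom-left corner $P'_{(k+1,1)}$, and let $Q'$ be the graph associated with $P'$.

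First I would record two elementary facts. Since $P$ is a submatrix of $P'$, the matrix $P'$ contains $P$, hence (by the Klazar--Marcus correspondence between matrix and graph containment) $Q$ is contained in $Q'$; therefore every graph on $[n]$ that avoids $Q$ also avoids $Q'$, so $\gex{Q}{n}\le\gex{Q'}{n}$, and by Lemma~\ref{graph_upperbound} applied to $P'$ (which has the required corner $1$-entry) this is at most $\ex{P'}{n}$. Second, adjoining a single $1$-entry increases the multiplicity of every distance vector by at most one, since the new entry forms exactly $k$ new pairs and these have pairwise distinct distance vectors; thus if $P$ is $r$-repetition free then $P'$ is $(r+1)$-repetition free.

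Now I would feed in the probabilistic input of Cibulka and Kyn\u{c}l \cite{CibKyncl}: for almost all $k\times k$ permutation matrices $P$, the matrix $P$ is $r$-repetition free with $r=r(k)=O(\log k/\log\log k)$, and every such $P$ obeys the bound $\ex{P}{n}=2^{O(r^{1/3}k^{2/3}(\log k)^2)}n$. Under the bijection this describes almost all permutation graphs $Q$ of length $k$. For each such $Q$, the second fact above makes $P'$ an $(r+1)$-repetition-free $(k+1)\times(k+1)$ permutation matrix with $r+1=O(\log k/\log\log k)$, so
\[
\ex{P'}{n}=2^{O\left((r+1)^{1/3}(k+1)^{2/3}(\log(k+1))^2\right)}n=2^{O\left(k^{2/3}(\log k)^{7/3}/(\log\log k)^{1/3}\right)}n,
\]
the last step being the substitution of the value of $r+1$ together with absorbing the change from $k$ to $k+1$ into the $O(\cdot)$. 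Combined with the first fact this gives $\gex{Q}{n}\le 2^{O(k^{2/3}(\log k)^{7/3}/(\log\log k)^{1/3})}n$ for almost all permutation graphs $Q$ of length $k$; the matching lower bound $\gex{Q}{n}=\Omega(n)$ is immediate (for $k\ge 2$ a path on $[n]$ already avoids every permutation graph of length $k$).

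I do not anticipate a real obstacle: the argument is just the bipartite-to-matrix reduction of Lemma~\ref{graph_upperbound} together with the corner-padding trick from the previous corollary, and the exponent arithmetic is routine. The only things requiring care are verifying that ``almost all'' transports cleanly across the bijection among permutation graphs, permutation matrices, and permutations, and confirming that $r=\Theta(\log k/\log\log k)$ is precisely the repetition-freeness threshold under which Cibulka and Kyn\u{c}l's $2^{O(r^{1/3}k^{2/3}(\log k)^2)}n$ bound collapses to the stated exponent --- which is exactly how the matrix version of this statement is derived.
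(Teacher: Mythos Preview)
Your proposal is correct and follows essentially the same route as the paper: pad $P$ to a $(k+1)\times(k+1)$ permutation matrix $P'$ with a bottom-left $1$-entry, observe that $P'$ is $(r+1)$-repetition free when $P$ is $r$-repetition free, apply Lemma~\ref{graph_upperbound} to bound $\gex{Q}{n}$ by $\ex{P'}{n}$, and then invoke the Cibulka--Kyn\u{c}l bound with $r=O(\log k/\log\log k)$. The paper merely sketches this in the two sentences preceding the corollary, while you have spelled out the bijection and the arithmetic; your added remark about the $\Omega(n)$ lower bound is not needed for the stated $2^{O(\cdot)}n$ upper bound.
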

\end{sloppypar}

A $j$-tuple permutation matrix of length $k$ is a $k\times kj$ matrix that results from replacing each one entry in a permutation matrix with a $1\times j$ matrix of ones and each zero entry with a $1\times j$ matrix of zeros. 
Then the \textit{$j$-tuple permutation graph} is a graph associated with a $j$-tuple permutation matrix.
Geneson and Tian \cite{TianGeneson} proved that $\ex{P}{n} = 2^{O(k)}n$ for all $j$-tuple permutation matrices $P$ of permutations of length $k$.
We improve the bound $\gex{P}{n} \leq 11k^4\binom{2k^2}{2k}n$ from Corollary 3.0.6 of \cite{WeidertThesis} with the same method.

\begin{corollary}
For all $j$-tuple permutation graphs $Q$ of length $k$, we have $\gex{Q}{n} = 2^{O(k)}n$.
\end{corollary}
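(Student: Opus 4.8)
The plan is to deduce the corollary from Lemma~\ref{graph_upperbound} in exactly the way the two preceding corollaries are obtained, the only extra care being to produce an auxiliary matrix that is simultaneously (i) a genuine $j$-tuple permutation matrix, so that the Geneson--Tian bound $\ex{\cdot}{n} = 2^{O(k)}n$ applies, and (ii) has a $1$-entry in a corner, so that Lemma~\ref{graph_upperbound} applies, while (iii) still containing the original pattern, so that the reduction is valid.

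Concretely, let $P$ be the $j$-tuple permutation matrix of length $k$ with associated graph $Q$, and let $\sigma \in S_k$ be the permutation from which $P$ is built. First I would set $\sigma' \in S_{k+1}$ to be $\sigma'(i) = \sigma(i)+1$ for $i \in [k]$ and $\sigma'(k+1) = 1$, and let $P'$ be the $j$-tuple permutation matrix of length $k+1$ built from $\sigma'$, with associated graph $Q'$. Since $\sigma'(k+1) = 1$, the block column of $P'$ corresponding to the value $1$ carries its $j$ ones in the last row, so in particular $P'_{(k+1,1)} = 1$; and since $\sigma'(i) \in \{2,\dots,k+1\}$ for $i \le k$, deleting the last row of $P'$ together with its first $j$ columns recovers precisely $P$. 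Hence $P'$ contains $P$, and it follows directly (an increasing injection on the first $k$ row-vertices and a suitably shifted one on the remaining column-vertices) that $Q'$ contains $Q$, so $\gex{Q}{n} \le \gex{Q'}{n}$ for every $n$.

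Next I would apply Lemma~\ref{graph_upperbound} to $P'$: it is a matrix on $[k+1]\times[(k+1)j]$ with a $1$-entry at $P'_{(k+1,1)}$ and $Q'$ is its associated graph, so $\gex{Q'}{n} \le \ex{P'}{n}$. Finally, $P'$ is the $j$-tuple permutation matrix of a permutation of length $k+1$, so the theorem of Geneson and Tian \cite{TianGeneson} gives $\ex{P'}{n} = 2^{O(k+1)}n = 2^{O(k)}n$, with the implied constant independent of $j$. Chaining the three bounds yields $\gex{Q}{n} = 2^{O(k)}n$. There is no real obstacle here, it is a clean corollary; the only point requiring attention is the construction of $P'$, namely verifying that $\sigma'$ is a permutation of $[k+1]$, that $P'$ is a bona fide $j$-tuple permutation matrix (and not merely a matrix representing one), and that the described deletion returns $P$ exactly, so that the containment $Q' \supseteq Q$ and hence the monotonicity of the graph extremal function are justified.
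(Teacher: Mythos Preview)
Your proposal is correct and follows essentially the same approach as the paper: extend the underlying permutation by one point so that the resulting $j$-tuple permutation matrix $P'$ has a $1$ in the bottom-left corner, apply Lemma~\ref{graph_upperbound} to pass from $\gex{Q'}{n}$ to $\ex{P'}{n}$, and then invoke the Geneson--Tian bound. Your explicit construction of $\sigma'$ and verification that deleting the last row and first $j$ columns of $P'$ returns $P$ exactly is more detailed than the paper's terse ``with the same method,'' but the underlying idea is identical.
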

\section{Improved upper bound on hypergraph extremal function}\label{hypergraph_extensions}

We improve the bound found by \cite{GunbyPalv} and provide a more elegant argument by building off of the results of \cite{TianGeneson} and generalizing our Lemma~\ref{graph_upperbound}.

\begin{theorem}\label{main_hypergraph_theorem}
Let $H$ be a fixed $d$-permutation hypergraph with $k$ edges. Then $\hyperex{i}{H}{n} = 2^{O(k)}n^{d-1}$.
\end{theorem}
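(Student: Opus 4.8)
The plan is to reduce the hypergraph extremal problem to the multidimensional matrix extremal problem, in the same spirit as Lemma~\ref{graph_upperbound}, and then invoke the bound $f(Q,d,n) = 2^{O(k)}n^{d-1}$ of Geneson and Tian for $d$-permutation matrices $Q$ of length $k$. The obstacle compared to the graph case is that an arbitrary hypergraph $G$ on $[n]$ that avoids $H$ need not be $d$-partite, so we cannot directly pass to its associated $d$-matrix; we must first \emph{make} it $d$-partite by a blow-up/splitting argument while controlling the loss in weight.

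First I would set up the correspondence carefully. Let $M$ be the $d$-permutation matrix of side length $k$ associated with $H$. By applying a cyclic-type shift to the parts (as in the graph case, where a $1$-entry is moved to the bottom-left corner), I would arrange that $M$, viewed as living in $[k]^d$, has its ``last'' part ordered after all the others in a way compatible with the linear order — concretely, I want the hypergraph $H'$ associated with $M$ to be order-isomorphic to $H$ after suitable relabeling, and I want each edge of $H'$ to take exactly one vertex from each of the $d$ consecutive blocks $[1,k], [k+1,2k], \dots, [(d-1)k+1, dk]$. This is the higher-dimensional analogue of the condition $r_{k_1} < c_1$ in Lemma~\ref{graph_upperbound}, and it is exactly what lets a copy of a $d$-partite pattern inside a $d$-partite host ``lift'' to a copy in the original (non-partite) hypergraph via an increasing injection.

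Next, the main step: given a hypergraph $G = ([n], E)$ avoiding $H$ with weight $\hyperex{i}{H}{n}$, I would partition $[n]$ into $d$ consecutive intervals $I_1, \dots, I_d$ of size $n/d$ each and restrict attention to edges that have exactly one vertex in each $I_j$ in increasing order of index; call the resulting $d$-partite sub-hypergraph $G^*$. One shows, as in Lemma~\ref{graph_larger_matrix}, that a copy of $H$ in $G^*$ would yield a copy of $H$ in $G$ (using the corner/shift normalization above so the increasing injection respects the block structure), so $G^*$ avoids $H$. Hence the $d$-matrix $M^*$ associated with $G^*$, which has side length $n/d$, avoids $M$, so $w(M^*) \le f(M, d, n/d) = 2^{O(k)}(n/d)^{d-1} = 2^{O(k)}n^{d-1}$. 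The catch is that $w(G^*)$ captures only a fraction of $w(G)$: a generic edge $e$ of $G$, being $d$-uniform, lands in a given ordered $d$-tuple of intervals only for specific edges, but there are only $\binom{d}{d} = 1$ \emph{increasing} choices of which intervals the $d$ vertices occupy — actually every $d$-set of vertices from distinct intervals is automatically in increasing interval-order, so the relevant count is: how many edges have their $d$ vertices spread across $d$ distinct intervals among the $\binom{d}{d}$... I would instead average over all $d^d$ (or, more cleanly, all ordered) ways of assigning the $d$ intervals, or better, use a random partition of $[n]$ into $d$ parts and note each fixed edge is ``rainbow'' (one vertex per part, and since parts are intervals the order is forced) with probability $\ge d!/d^d$, a constant depending only on $d$. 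This gives $\hyperex{i}{H}{n} \le (d^d/d!)\, 2^{O(k)} n^{d-1} = 2^{O(k)} n^{d-1}$, absorbing the $d$-dependent factor into the implied constant (which is allowed since $d$ is fixed).

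The hard part, and where I would spend the most care, is verifying that the lifting in the middle step is valid: that a sub-hypergraph of the \emph{interval-partitioned}, non-$d$-partite host which is order-isomorphic to the $d$-partite pattern $H'$ genuinely forces a copy of $H$ — the analogue of the ``only two/three intervals'' case analysis in Lemma~\ref{graph_larger_matrix}, now with $d$ blocks and $d$-uniform edges rather than $2$. One must rule out ``diagonal'' copies that use vertices from fewer than $d$ of the reference blocks or that cross block boundaries in an order-incompatible way; the corner-normalization of $M$ is what makes all surviving copies block-aligned. Once that is pinned down, everything else is the matrix bound plus a constant-factor averaging, and the same argument goes through verbatim when $M$ is a $j$-tuple $d$-permutation matrix, since Geneson and Tian's $2^{O(k)}n^{d-1}$ bound holds there as well.
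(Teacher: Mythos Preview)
Your reduction has a genuine gap in the averaging step. You want to partition $[n]$ into $d$ \emph{consecutive} intervals $I_1,\dots,I_d$ and keep only the ``rainbow'' edges (one vertex in each $I_j$), then argue that a constant fraction of the edges of $G$ survive. But for ordered hypergraphs the partition must be into consecutive intervals or the block-order argument breaks; and once the parts are consecutive intervals there is essentially nothing to randomize. For a fixed $d$-edge $\{v_1<\cdots<v_d\}$, whether it is rainbow depends on the gaps $v_{i+1}-v_i$: if all vertices lie in a window of width $d$ then \emph{no} interval partition into blocks of size $n/d$ makes it rainbow, and even averaging over all choices of $d-1$ cut points gives probability $\prod_i (v_{i+1}-v_i)/\binom{n-1}{d-1}$, which is not bounded below by any constant. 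A hypergraph $G$ could put all of its weight on edges supported in $[d]$ and your $G^*$ would be empty. The $d!/d^d$ heuristic you invoke is the unordered colour-class averaging, which does not survive the requirement that parts be intervals.

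The paper sidesteps this entirely: for a $d$-uniform $G$ on $[n]$ it builds the $d$-matrix $A$ of side $n$ with $A_{(k_1,\dots,k_d)}=1$ whenever $\{k_1<\cdots<k_d\}\in E(G)$, using the sorted vertex positions themselves as coordinates rather than first forcing $G$ into a $d$-partite mold. The corner condition you correctly identified (Lemma~\ref{hypergraph_k+d-1_lemma}, augmenting $H$ to length $k+d-1$ so that for each $i$ some edge straddles the block boundary $\{it,it+1\}$) is then exactly what guarantees that a copy of $P$ in $A$ pulls back to a copy of $H$ in $G$ (Lemma~\ref{hypergraph_extremal_lemma}). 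There is also a second issue you do not address: $\hyperex{i}{H}{n}$ ranges over \emph{all} hypergraphs, not just $d$-uniform ones. The paper handles this by discarding edges of size below $d$ (at cost $O(n^{d-1})$), truncating large edges to size at most $kd$ (with bounded multiplicity, since otherwise $H$ appears), and then summing the $d$-uniform bound over the finitely many uniformities $d,d+1,\dots,kd$, each via a slightly larger permutation matrix $P_i$.
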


We find a class of $d$-partite hypergraphs whose extremal functions can be bounded by the extremal functions for their associated $d$-matrices.

\begin{lemma}\label{hypergraph_extremal_lemma}
For some $t, d\in \mathbb{Z}^+$, let $H = ([dt], D)$ be a $d$-uniform $d$-partite hypergraph with parts of size $t$ such that for each $i\in [d-1]$, $H$ has an edge that is a superset of $\{it , it+1\}$. Let $G = ([n], E)$ be a $d$-uniform hypergraph that avoids $H$. If $P$ is the $d$-dimensional matrix associated with $H$, then the number of edges in $G$ is at most $f(P, d, n)$.
\end{lemma}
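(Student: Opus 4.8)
The plan is to mimic the proof of Lemma~\ref{graph_upperbound}, "unfolding" a $d$-uniform hypergraph $G$ on $[n]$ into a $d$-partite hypergraph $G^*$ on $[dn]$ with parts $I_j = [(j-1)n+1, jn]$ for $j \in [d]$, so that $G^*$ is (essentially) the hypergraph associated with a $d$-matrix $B$ of side length $n$. Concretely, for each edge $e = \{a_1 < a_2 < \cdots < a_d\}$ of $G$, put the edge $\{a_1, n+a_2, 2n+a_3, \ldots, (d-1)n + a_d\}$ into $G^*$; equivalently, set a $1$-entry of $B$ at $(a_1, a_2, \ldots, a_d)$. Then $w(B) = |E|$, and the key structural fact is that every edge of $G^*$ uses exactly one vertex from each part and these vertices appear in increasing order of part index — so $B$ really is a well-defined binary $d$-matrix (no two distinct edges of $G$ collide, since an edge of $G$ is recovered from its $G^*$-image by reducing coordinates mod the part structure).

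Next I would argue the contrapositive: if $|E| > f(P, d, n)$, then $B$ contains $P$, i.e. some submatrix $P'$ of $B$ (on index sets $R_1 \subset [n], \ldots, R_d \subset [n]$, each listed in increasing order, with $|R_j|$ equal to the $j$-th side length of $P$) represents $P$. Here I must use the hypothesis that $H$ has, for each $i \in [d-1]$, an edge containing $\{it, it+1\}$: this is exactly the condition that guarantees that when we translate a copy of $P$ sitting inside $B$ back down into $G$, consecutive "blocks" of coordinates do not overlap — just as the $1$-entry at $P_{(k_1,1)}$ in the graph case forced $r_{k_1} < c_1$. I would spell out that this adjacency condition on $H$ forces, for the copy $P'$ of $P$ in $B$, the inequality $\max(R_i) < \min(R_{i+1})$ for each $i \in [d-1]$ (the extremal $1$-entries of $P$ corresponding to the edges through $\{it,it+1\}$ pin down the relevant coordinates). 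Consequently the multiset $R_1 \cup \cdots \cup R_d$, read in the order $R_1$ then $R_2$ then $\ldots$, is a strictly increasing sequence of distinct integers in $[n]$, and the map $f$ sending the $G^*$-vertex "$(j-1)n + a$" (for $a \in R_j$) to "$a \in [n]$" is an increasing injection.

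Then the copy of $P$ inside $B$ is a copy of $H$ inside $G^*$ (via the associated-hypergraph correspondence and the increasing injection between the submatrix index sets and their literal values), and applying $f$ shows $G$ contains $H$ — using Definition of hypergraph containment with $f$ as above and the edge-injection inherited from the copy of $H$ in $G^*$. This contradicts the assumption that $G$ avoids $H$, so $|E| \le f(P, d, n)$.

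\textbf{Main obstacle.} The genuinely delicate point is the same one that makes the graph lemma work: verifying that a copy of $P$ living inside the "folded" matrix $B$ really does un-fold to an honest copy of $H$ in $G$, and in particular that the row/column (here, cross-section) index sets of $P'$ don't interleave after we project $[dn] \to [n]$. In the graph case this was handled by the single corner $1$-entry forcing $r_{k_1} < c_1$; in the $d$-dimensional case I expect to need the full set of $d-1$ adjacency hypotheses, and the bookkeeping — showing each consecutive pair of blocks is separated, hence the whole concatenated index sequence is increasing and $f$ is a valid increasing injection — is where the proof has to be careful. A secondary (minor) subtlety is confirming that distinct edges of $G$ map to distinct edges of $G^*$ so that $B$ is well-defined with $w(B) = |E|$, which follows because an edge of $G$, being a $d$-subset of $[n]$ listed in increasing order, is uniquely reconstructed from its image.
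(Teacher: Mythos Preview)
Your proposal is correct and follows essentially the same approach as the paper's proof: encode $G$ as a $d$-matrix $A$ (your $B$), pass to the associated $d$-partite hypergraph on $[dn]$, and use the $d-1$ adjacency hypotheses on $H$ to force $\max(R_i) < \min(R_{i+1})$ so that the projection $[dn]\to[n]$ yields an increasing injection witnessing a copy of $H$ inside $G$. The paper phrases this via the specific inequalities $r_{it} < r_{it+1}$ (since the edge through $\{it,it+1\}$ corresponds to a $1$-entry of $A$, whose coordinates are strictly increasing by construction), but the argument is identical.
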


\begin{proof}
Let $A$ be the $d$-dimensional matrix with side length $n$ such that for each $e = \{k_1,\ldots ,k_d\}\in E$ with $k_1 < \cdots < k_d$, $A$ has a one entry at $a_{k_1, \ldots, k_d} = 1$. Then $A$ has $|E|$ 1-entries.

Suppose for contradiction that $A$ has more than $f(P, d, n)$ 1-entries. 
Then $A$ contains $P$. Let $G' = ([nd], E')$ be the hypergraph associated with $A$, so $G'$ contains $H$. Let $H' = (V, F)$ be the subgraph of $G'$ isomorphic to $H$. 
Then $H'$ is $d$-partite with parts $\{r_1, \ldots, r_{t}\}, \{n + r_{t + 1}, \ldots, n + r_{2t}\}, \ldots, \{(d-1)n + r_{(d-1)t + 1}, \ldots, (d-1)n + r_{dt}\}$.
Assume that for each $i\in [t]$ I have $r_{(i-1)d + 1} < \cdots < r_{id}$.
Let $g: [dt] \mapsto V$ be the isomorphism from $H$ to $H'$. Then $g(\{it, it + 1\}) = \{r_{it}, n + r_{it + 1}\}$ for each $i\in [d-1]$, so the edge in $H'$ that is a superset of $\{r_{it}, n + r_{it + 1}\}$ corresponds to a 1-entry in $A$.

By the construction of $A$ it follows that $r_{it} < r_{it + 1}$. Then $1 \leq r_1 < \cdots < r _{dt} \leq n$.
Then let $H'' = (V', F')$ be the graph defined by $V' = \{r_1, \ldots, r_{dt}\}$ that contains edges $\{r_{i_1}, \ldots, r_{i_d}\}\in F'$ if $A_{(r_{i_1}, \ldots, r_{i_d})} = 1$. Clearly $H''$ is contained in $G$, and $H''$ is order-isomorphic to $H$. Then $G$ contains $H$, contradiction.
\end{proof}

\begin{lemma}\label{hypergraph_k+d-1_lemma}
Let $H$ be a fixed $d$-permutation hypergraph of length $k$. Then there exists a $d$-permutation hypergraph $H'$ of length $k + d - 1$ that contains $H$ and satisfies the conditions of Lemma~\ref{hypergraph_extremal_lemma}.
\end{lemma}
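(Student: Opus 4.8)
The plan is to take the given $d$-permutation hypergraph $H$ of length $k$, which is associated with some $d$-permutation matrix $P$ of length $k$ (constructed from permutations $\pi_1, \dots, \pi_{d-1}$ on $[k]$), and to explicitly enlarge it by appending $d-1$ new ``coordinate layers'' so that the resulting hypergraph $H'$ both contains $H$ and has, for each $i \in [d-1]$, an edge that is a superset of $\{i k', i k' + 1\}$ where $k' = k + d - 1$ is the new length. Concretely, I would work on the matrix side: build a $d$-permutation matrix $P'$ of length $k+d-1$ that represents $P$ on its first $k$ rows of each axis and whose new $1$-entries are placed so that, for each $i \in [d-1]$, there is a $1$-entry whose $i\th$ and $(i+1)\th$ coordinates are the two largest possible consecutive values forced to sit at the ``junction'' between part $i$ and part $i+1$ of the associated hypergraph. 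Then let $H'$ be the hypergraph associated with $P'$; by the observation of Klazar and Marcus quoted in the excerpt, $H' \supseteq H$ exactly because $P'$ represents (a copy of) $P$, and the junction condition on $P'$ translates directly into the ``$H'$ has an edge that is a superset of $\{ik', ik'+1\}$'' hypothesis of Lemma~\ref{hypergraph_extremal_lemma}.

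The key steps, in order, would be: (1) recall that $H$ corresponds to permutations $\pi_1, \dots, \pi_{d-1}$ and write down exactly which $kd$ vertices and $k$ edges $H$ has under the ``associated hypergraph'' definition. (2) Define new permutations $\pi_1', \dots, \pi_{d-1}'$ on $[k+d-1]$ that restrict (in an order-isomorphic way) to $\pi_1, \dots, \pi_{d-1}$ on an appropriate $k$-subset, and that in addition route $d-1$ fresh indices so that index $i+1$ of each new fiber is forced into the correct ``consecutive at the junction'' position. The natural choice is to reserve the $d-1$ largest symbols $k+1, \dots, k+d-1$ of each axis for these extra $1$-entries and to arrange them in a staircase: the $j\th$ new $1$-entry has its first $j$ coordinates equal to $k+j$-ish values and the rest small, so that it witnesses the junction between parts $j$ and $j+1$. (3) Let $P'$ be the $d$-permutation matrix of $\pi_1', \dots, \pi_{d-1}'$ of length $k+d-1$ and $H'$ its associated hypergraph; verify $P'$ represents $P$ (hence $H' \supseteq H$) and verify the junction-supersets condition. (4) Conclude by invoking Lemma~\ref{hypergraph_extremal_lemma}.

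The main obstacle I expect is step (2): making the bookkeeping of the ``associated hypergraph'' translation airtight. The map sending a $1$-entry $m_{k_1,\dots,k_d}$ of a $d$-matrix of side length $t$ to the edge $\{k_1,\ t + k_2,\ 2t + k_3,\ \dots,\ (d-1)t + k_d\}$ means that the hypothesis ``$H'$ has an edge $\supseteq \{i k', i k' + 1\}$'' is the statement ``$P'$ has a $1$-entry with $(i{+}1)\th$ coordinate equal to $k'$ and... wait, $ik'$ is the last vertex of part $i$ and $ik'+1$ is the first vertex of part $i+1$'' — so I need a $1$-entry of $P'$ whose $i\th$ coordinate is $k' = k+d-1$ (the largest, hitting $ik' = i(k+d-1)$ as the top of block $i$) and whose $(i+1)\th$ coordinate is $1$ (hitting $ik'+1$ as the bottom of block $i+1$). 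Thus the real content is to choose the staircase so that the $i\th$ extra $1$-entry has its $i\th$ coordinate maximal and its $(i{+}1)\th$ coordinate minimal, while keeping all $\pi_j'$ genuine permutations and keeping an order-isomorphic copy of the original $(\pi_1,\dots,\pi_{d-1})$ intact on the remaining rows; once the explicit formula for $\pi_j'$ is pinned down the verifications are routine. A secondary, minor nuisance is confirming that adding $d-1$ rather than more layers suffices — that the $d-1$ junction requirements can be met simultaneously without conflicts — which the staircase arrangement handles because junction $i$ only constrains coordinates $i$ and $i+1$ and consecutive constraints can be chained.
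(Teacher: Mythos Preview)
Your proposal is correct and follows essentially the same approach as the paper: both enlarge the associated $d$-permutation matrix $P$ to a $d$-permutation matrix $P'$ of side length $k+d-1$ by adjoining $d-1$ extra $1$-entries, each having its $i\th$ coordinate equal to $k+d-1$ and its $(i{+}1)\th$ coordinate equal to $1$ (exactly the translation of the junction condition you identified), while keeping an order-isomorphic copy of $P$ inside. The paper simply gives a clean explicit realization of your staircase: it takes the auxiliary $d$-permutation $Q$ of side $d$ whose $1$-entries are the cyclic shifts $(i,i+1,\dots,d,1,\dots,i-1)$ for $i\in[d]$ and replaces the entry $(1,2,\dots,d)$ by $P$, which dispatches the bookkeeping you flagged as the main obstacle in one stroke.
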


\begin{proof}
Let $P$ be the $d$-matrix of $H$, so $P$ has side length $k$.
Let $Q$ be the $d$-permutation of length $d$ that has one entries at each of the cyclic variants of $(1, \ldots, d)$; i.e. $(1, 2, \ldots, d)$, $(d, 1, \ldots, d-1)$, $\ldots$, $(2, \ldots, d, 1)$.
Construct $P'$ by replacing the one entry at $(1, \ldots, d)$ with $P$, so $P'$ has side length $k + d - 1$.
Let $H'$ be the hypergraph associated with $P'$.
Clearly $H'$ is a $d$-permutation of length $(k + d - 1)$ that contains $H$.
For $i = 2,\ldots, d$, the $(d-i+1)\th$ coordinate of the entry $(i, \ldots, d, 1, \ldots, i-1)$ of $Q$ is $d$.
This entry corresponds to an entry in $P'$ whose $(d - i + 1)\th$ coordinate is $k + d - 1$ and whose $(d - i + 2)\th$ coordinate is $1$.
Then the edge in $H'$ that corresponds to that entry contains $\{(d - i + 1)(k + d - 1) + (k + d - 1), (d - i + 2)(k + d - 1) + 1\}$.
\end{proof}

Now we prove the main theorem of this section.
\begin{proof}
\begin{sloppypar}
If $H$ is a $d$-permutation hypergraph of length $k$, then Lemma~\ref{hypergraph_extremal_lemma}, there exists a $d$-permutation hypergraph $H' = ([(k+d-1)d], D)$ of length $k+d-1$ that contains $H$ and satisfies the conditions of Lemma~\ref{hypergraph_extremal_lemma}. Let $P'$ be the $d$-matrix associated with $H'$.
\end{sloppypar}

Let $G = ([n], E)$ be a hypergraph avoiding $H$, so $G$ also avoids $H'$. Create $G' = ([n], E')$ from $G$ by removing every edge from $E$ with size less than $d$.
Create $G'' = ([n], E'')$ from $G'$ by replacing every edge having more than $kd$ vertices $\{v_1, \ldots, v_l\} \in E'$ with $\{v_1, \ldots, v_{(k+d)d}\}$. 
For each edge in $e\in E''$, there are at most $d$ edges in $E'$ that map to $e$, otherwise $G'$ would contain $H$'.

For each $i = d, d+1, \ldots, kd$, let $G_i = ([n], E_i)$ be the $i$-uniform hypergraph that consists of every edge of $G''$ of size $i$. 

Let $P_{d-1} = P'$, and let $P_i$ be a $d$-permutation matrix of length $k+i$ that contains $P'$ and $P_{i-1}$ such that associated hypergraph of $P_i$ satisfies the conditions of Lemma~\ref{hypergraph_extremal_lemma}.
We construct each $P_i$ by inserting a 1-entry somewhere in $P_{i-1}$ between any consecutive cross sections.

Let $H_i$ be the $d$-permutation hypergraph associated with $P_{i}$.
Since $H_i$ contains $H'$, $G_i$ avoids $H_i$.
From Lemma~\ref{hypergraph_extremal_lemma}, we see that $|E_i| \leq f(P_i, d, n)$.
Since each $P_i$ is contained in $P_{kd}$, it follows that $f(P_i, d, n) \leq f(P_{kd}, d, n)$.

\begin{align*}
|E| &\leq \binom{n}{0} + \cdots + \binom{n}{d-1} + |E'| \\
& \leq dn^{d-1} + d|E''| \\
& \leq dn^{d-1} + d\left(\sum_{i = d}^{kd} |E_i|\right)\\
& \leq dn^{d-1} + d\left(\sum_{i = d}^{kd} f(P_{kd}, d, n)\right)\\
& \leq dn^{d-1} + kd^2\left(2^{O(kd)}(n)^{d-1}\right) \\
& = 2^{O(k)}n^{d-1},
\end{align*}
where the constant hidden in $O(k)$ depends on $d$. 
We used the Theorem 4.1 from \cite{TianGeneson}, which states that $f(P, d, n) = 2^{O(k)}n^{d-1}$ for any $d$-permutation matrix $P$ of length $k$.
Then from \cite{KlazarMarcus}, I have $\hyperex{i}{H}{n} \leq (2kd - 1)(k - 1)\hyperex{e}{H}{n}$, so the result follows.
\end{proof}

Geneson and Tian \cite{TianGeneson} showed that for any $j$-tuple $d$-permutation matrix of length $k$, I have $f(P, d, n) = 2^{O(k)}n^{d-1}$. It is easy to modify the proof of Theorem~\ref{main_hypergraph_theorem} to get the following corollary.
\begin{corollary}
If $P$ is a hypergraph associated with a $j$-tuple $d$-permutation matrix of length $k$, then $\hyperex{i}{P}{n} = 2^{O(k)}n^{d-1}$.
\end{corollary}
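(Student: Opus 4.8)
The plan is to run the proof of Theorem~\ref{main_hypergraph_theorem} essentially verbatim, replacing "$d$-permutation hypergraph / matrix" by the corresponding $j$-tuple object everywhere, and feeding in the Geneson--Tian bound $f(P,d,n)=2^{O(k)}n^{d-1}$ for $j$-tuple $d$-permutation matrices of length $k$ (from \cite{TianGeneson}) in place of their bound for ordinary $d$-permutation matrices. Fix $P$, the hypergraph of a $j$-tuple $d$-permutation matrix of length $k$; then $P$ is $d$-uniform and $d$-partite, with $d-1$ parts of size $k$ and one part of size $kj$, and it has $kj$ edges. Since $j$ is fixed, $2^{O(kj)}=2^{O(k)}$, so it suffices to produce a bound of the former shape.

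First I would record the two auxiliary lemmas in the form needed here. The proof of Lemma~\ref{hypergraph_extremal_lemma} uses only the $d$-partition structure of $H$ and never the fact that its parts share a common size, so it applies unchanged to a $d$-uniform $d$-partite $H$ with parts of sizes $t_1,\dots,t_d$, provided that for each $i\in[d-1]$ the hypergraph $H$ has an edge that is a superset of $\{t_1+\cdots+t_i,\ t_1+\cdots+t_i+1\}$; I would simply restate it this way. For the analog of Lemma~\ref{hypergraph_k+d-1_lemma} I would take the length-$k$ permutation $\sigma$ underlying $P$, embed it into the length-$d$ cyclic-shift gadget $Q$ exactly as in the proof of Lemma~\ref{hypergraph_k+d-1_lemma} to obtain a $d$-permutation matrix $\tilde P$ of length $k+d-1$ whose associated hypergraph meets those boundary conditions, and then $j$-tuple $\tilde P$ in the same coordinate that is blown up in $P$ to obtain a $j$-tuple $d$-permutation matrix $P'$ of length $k+d-1$. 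Because $\tilde P$ contains $\sigma$ and $j$-tupling a fixed coordinate preserves matrix containment, $P'$ contains $P$; and the same computation as in Lemma~\ref{hypergraph_k+d-1_lemma} shows that the $j$-tupled boundary entries still witness an edge across each of the required consecutive-vertex cuts, so the hypergraph of $P'$ satisfies the hypotheses of the generalized Lemma~\ref{hypergraph_extremal_lemma}.

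With these in hand the remainder is the bookkeeping of the main proof. A hypergraph $G$ on $[n]$ avoiding $P$ also avoids $P'$; delete its edges of size $<d$ (at most $dn^{d-1}$ of them), truncate each edge of size exceeding $kd$ to its first $O(k)$ vertices (at most $d$ original edges mapping to each truncated edge, else $G$ would contain $P'$), split into the $i$-uniform pieces $G_i$ for $d\le i\le kd$, and for each $i$ apply Lemma~\ref{hypergraph_extremal_lemma} against a pattern $P_i$ of length $O(k)$ obtained by inserting the needed extra cross sections into $P'$. At this point I only need $f(P_i,d,n)=2^{O(k)}n^{d-1}$, and I get it by noting that $P_i$ is contained in some $j$-tuple $d$-permutation matrix $\hat P_i$ of length $O(k)$ (complete the partial blown-up permutation), so $f(P_i,d,n)\le f(\hat P_i,d,n)=2^{O(k)}n^{d-1}$ by monotonicity of $f$ under pattern containment together with \cite{TianGeneson}. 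Summing over the $O(k)$ values of $i$ gives that $G$ has $2^{O(k)}n^{d-1}$ edges, with the constant depending on $d$ and $j$; the Klazar--Marcus conversion from edge count to weight, $\hyperex{i}{P}{n}\le c(k,d)\,\hyperex{e}{P}{n}$ from \cite{KlazarMarcus}, then yields $\hyperex{i}{P}{n}=2^{O(k)}n^{d-1}$.

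The one point that needs genuine care -- and the step I expect to be the main obstacle -- is the interaction of the $j$-tuple structure with the "insert one cross section between consecutive cross sections" operations that build $P'$ from $P$ and each $P_i$ from $P_{i-1}$: a single insertion destroys the block-of-$j$ pattern, so every insertion must be carried out $j$ at a time (inserting a full block), or, as above, performed on the underlying permutation and then $j$-tupled, after which one must re-verify that the $d-1$ boundary edges demanded by Lemma~\ref{hypergraph_extremal_lemma} survive. Choosing the blown-up coordinate and the placement of the cyclic gadget so that those $d-1$ cuts lie in the unblown coordinates (or fall within the first block of the blown coordinate) makes this routine, but it is the portion that has to be written out rather than simply quoted from Theorem~\ref{main_hypergraph_theorem}.
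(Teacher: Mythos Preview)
Your proposal is correct and is exactly the approach the paper has in mind: the paper's entire proof of this corollary is the sentence ``It is easy to modify the proof of Theorem~\ref{main_hypergraph_theorem},'' together with the citation of the Geneson--Tian bound $f(P,d,n)=2^{O(k)}n^{d-1}$ for $j$-tuple $d$-permutation matrices. You have filled in precisely those modifications---allowing unequal part sizes in Lemma~\ref{hypergraph_extremal_lemma}, building the gadget at the level of the underlying permutation and then $j$-tupling, and completing each $P_i$ to a genuine $j$-tuple matrix before invoking \cite{TianGeneson}---and correctly flagged the only nontrivial point, namely keeping the block-of-$j$ structure intact through the cross-section insertions.
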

%\section{Applications to partitions}\label{applications_to_partitions}

%I can use this theorem to bound the number of hypergraphs avoiding a given $d$-permutation hypergraph.

\begin{theorem}\label{number_forbidden_hypergraphs}
Let $H$ be a $d$-permutation hypergraph of length $k$. The number of hypergraphs with vertex set $[n]$ that avoid $H$ is at most $2^{2^{O(k)}n^{d-1}}$.
\end{theorem}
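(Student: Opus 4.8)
The plan is to bound the number of $H$-avoiding hypergraphs on $[n]$ by encoding each such hypergraph compactly, with the encoding length controlled by the weight bound from Theorem~\ref{main_hypergraph_theorem}. First I would reduce to the uniform case: an $H$-avoiding hypergraph $G = ([n], E)$ decomposes into its edges of size $< d$ (there are at most $\sum_{i=0}^{d-1}\binom{n}{i} \le d n^{d-1}$ such subsets, contributing a factor of at most $2^{dn^{d-1}}$ to the count) and, for each $i \ge d$, its $i$-uniform part $G_i$. As in the proof of Theorem~\ref{main_hypergraph_theorem}, one can truncate edges of size larger than $kd$ down to size $(k+d)d$ at the cost of a bounded multiplicity, so it suffices to count, for each $i \in \{d, \ldots, kd\}$, the number of $i$-uniform $H_i$-avoiding hypergraphs on $[n]$, where $H_i$ is the enlarged $d$-permutation hypergraph from that proof; multiplying these $O(k)$ counts together only changes the constant in the exponent.

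For the uniform case I would mimic the Klazar–Marcus / Balogh-type counting argument: a $d$-uniform $H_i$-avoiding hypergraph on $[n]$ corresponds, after partitioning $[n]$ into $d$ consecutive blocks of size $n/d$ (padding $n$ up to a multiple of $d$), to a $d$-partite $d$-uniform hypergraph, hence to a $d$-matrix of side length $n/d$ avoiding $P_i$; but a general $d$-uniform hypergraph is a union of at most $d^d = O_d(1)$ such $d$-partite pieces (one for each ordered way the $d$ vertices of an edge distribute among the blocks, refined so each edge meets each block at most once), so it is enough to count $P_i$-avoiding $d$-matrices of side length $m := \lceil n/d\rceil$. Such a matrix has at most $f(P_i, d, m) = 2^{O(k)}m^{d-1} = 2^{O(k)}n^{d-1}$ one-entries by Theorem~4.1 of \cite{TianGeneson}, so it is determined by choosing that many positions out of $m^d \le n^d$; the number of such choices is at most $\sum_{j=0}^{2^{O(k)}n^{d-1}} \binom{n^d}{j} \le \left(2^{O(k)}n^{d-1}\right)\binom{n^d}{2^{O(k)}n^{d-1}} \le \left(e n^d / (2^{O(k)}n^{d-1})\right)^{2^{O(k)}n^{d-1}} \cdot 2^{O(k)}n^{d-1} = 2^{O(k)\,n^{d-1}\log n}$.

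The issue is that this naive position-counting loses a $\log n$ factor in the exponent, giving only $2^{2^{O(k)}n^{d-1}\log n}$ rather than the claimed $2^{2^{O(k)}n^{d-1}}$. To remove it I would run the standard Marcus–Tardos-style recursive partition used to prove the extremal bound: divide each of the $d$ coordinate axes into blocks of a constant size $s = s(k,d)$, classify each $s^d$-cell by its pattern of occupied sub-blocks, use the $P_i$-avoidance to show that only $2^{O(k)}$ of the roughly $(m/s)^{d-1}$ cells can be ``dense'' (contain too many distinct occupied sub-rows), then encode the matrix by: (i) the $O(k)$-bit-per-cell list of which cells are dense, costing $2^{O(k)(m/s)^{d-1}} = 2^{O(k)n^{d-1}}$ bits after optimizing $s$, (ii) the $O(k)$-bounded contents of the dense cells, and (iii) recursively the sparse part, which has a bounded number of occupied sub-rows per cell and so reduces to a problem of size $m/s$ with a geometrically shrinking contribution. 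Summing the geometric series keeps the total at $2^{O(k)n^{d-1}}$. The main obstacle is getting this recursion to close cleanly in $d$ dimensions with the constants tracked so that the per-level cost forms a convergent geometric series; this is exactly the bookkeeping already carried out for the matrix extremal function in \cite{TianGeneson} and \cite{KlazarMarcus}, so I would cite their partition lemma and only verify that it also yields the counting bound, rather than re-deriving it.
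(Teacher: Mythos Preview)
Your diagnosis of the $\log n$ loss from naive position-counting is correct, and a recursive cell-encoding scheme of the kind you sketch can in principle remove it. But the paper takes a much shorter route that bypasses the matrix picture entirely: the classical Klazar contraction argument, applied directly to hypergraphs. Partition $[tn]$ into $n$ consecutive blocks $I_1,\dots,I_n$ of size $t$, and for $G\in M(H,tn)$ let $G'$ be the hypergraph on $[n]$ obtained by replacing each edge $e$ of $G$ with $\{j:e\cap I_j\neq\emptyset\}$. Since $G$ contains $G'$, the contraction still avoids $H$. The number of preimages of a fixed $G'$ is at most $(2^t-1)^{\text{ex}_i(H,n)}$, because each incidence of $G'$ can be inflated to any of the $2^t-1$ nonempty subsets of the corresponding block. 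With Theorem~\ref{main_hypergraph_theorem} this gives
\[
\log|M(H,tn)|\le \log|M(H,n)|+2^{O(k)}\log(2^t-1)\,n^{d-1},
\]
and iterating along powers of $t$ produces a geometric series summing to $2^{O(k)}n^{d-1}$, with no $\log n$ factor and no matrix recursion.

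Compared to this, your plan requires a reduction from hypergraphs to matrices, a recursive encoding on the matrix side, and a verification that the Marcus--Tardos/Klazar--Marcus partition machinery, which bounds \emph{weight}, upgrades to a \emph{counting} statement with the same constants; you defer that last step to citation, but it is exactly the content of the theorem, and the cited lemmas do not state it. The Klazar contraction trick gets the counting bound as a one-step consequence of the extremal bound already proved. As a minor aside, your decomposition of a $d$-uniform hypergraph into $O_d(1)$ $d$-partite pieces via a fixed partition of $[n]$ into $d$ blocks does not work as stated, since an edge may place several vertices in the same block; the paper's Lemma~\ref{hypergraph_extremal_lemma} handles the hypergraph-to-matrix passage differently, but for this theorem that passage is unnecessary anyway.
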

\begin{proof}
Let $M(H, n)$ be the set of hypergraphs on $[n]$ that avoid $H$. 

Let $G\in M(H, tn)$. For each $i\in [n]$, let the intervals $I_i = \{(i-1)t + 1, \ldots, it\}$.
Create a new graph $G'$ on $[n]$ such that for each edge $e$ of $G$, if $e$ has vertices in $I_{k_1}, \ldots, I_{k_l}$, then $G'$ has the edge $\{k_1, \ldots, k_l\}$. Since $G$ contains $G'$, it follows that $G'$ also avoids $H$. Let $f: M(H, tn) \mapsto M(H, n)$ be the map from each $G$ to $G'$. 
For each $G'\in M(H, n)$, at most $(2^t - 1)^{\hyperex{e}{H}{n}}$ graphs $G\in M(tn, H)$ map to $G'$ because each incidence in $G'$ has $2^t - 1$ possible sets of incidences in $G'$ that map to it.
Then
$|M(tn, H)| \leq (2^t - 1)^{\hyperex{i}{H}{n}}|M(n, H)|$.
Let $f(n) = \log{|M(n, H)|}$.
We get $f(tn) = f(n) + 2^{O(k)}\log{(2^t-1)}n^{d-1}$, so iterating this inequality gives 
\[f(t^l) = \sum_{i = 1}^l 2^{O(k)}\log{(2^t-1)}(t^{i-1})^{d-1}.\]
Then $f(t^l) = 2^{O(k)}(t^l)^{d-1}$, so $|M(n, H)| =2^{2^{O(k)}n^{d-1}}$
\end{proof}

\section{Lower bounds on extremal functions}\label{lowerbounds}
The following lemma is a generalization of Theorem 2.1 from \cite{TianGeneson}.
\begin{lemma}\label{general_probabilistic_matrix_bound}
Let $B$ be a $d$-matrix on $[k_1]\times \cdots\times [k_d]$ . Then
\[f(B, d, n) = \Omega\left(n^{d - \frac{k_1 + \cdots + k_d - d}{w(B) - 1}}\right).\]
\end{lemma}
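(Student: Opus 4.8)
The plan is to use the probabilistic deletion method. I would take a random $d$-matrix $A$ of side length $n$ in which each entry is independently a $1$-entry with probability $p$, where $p$ will be tuned at the end, and then delete one $1$-entry from each copy of $B$ in $A$; the resulting matrix avoids $B$, and its expected number of $1$-entries gives the lower bound.

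First I would compute $\mathbb{E}[w(A)] = p\,n^d$. Next I would bound the expected number of copies of $B$ in $A$. A copy of $B$ is determined by a choice of $w(B)$ cells of $A$ that form the $1$-entries of an occurrence of $B$; the number of such potential placements is at most $n^{k_1 + \cdots + k_d}$ divided by nothing essential (choosing $k_l$ coordinates in the $l$-th direction, in increasing order, contributes at most $\binom{n}{k_l} \le n^{k_l}$, so the total number of candidate submatrices is at most $n^{k_1 + \cdots + k_d}$), and each such candidate is an actual copy of $B$ with probability exactly $p^{w(B)}$ (the $w(B)$ required cells must all be $1$-entries; the others are unconstrained). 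Hence the expected number of copies of $B$ is at most $p^{w(B)} n^{k_1 + \cdots + k_d}$. Deleting one $1$-entry per copy, the expected number of surviving $1$-entries is at least
\[
p\,n^d - p^{w(B)} n^{k_1 + \cdots + k_d}.
\]
So there exists a $B$-avoiding $d$-matrix of side length $n$ with at least this many $1$-entries.

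Finally I would optimize over $p$. Writing $s = k_1 + \cdots + k_d$ and $w = w(B)$, I want $p^{w} n^{s} \le \tfrac12 p\,n^d$, i.e. $p^{w-1} \le \tfrac12 n^{d-s}$, so I choose $p = c\, n^{-(s-d)/(w-1)}$ for a suitable constant $c = c(B) \in (0,1]$ (note $w \ge 2$ since $B$ has at least two $1$-entries whenever the bound is nontrivial, and $s - d \ge 0$). With this choice the bound becomes $\tfrac12 p\,n^d = \tfrac{c}{2}\, n^{d - (s-d)/(w-1)} = \Omega\!\left(n^{\,d - (k_1 + \cdots + k_d - d)/(w(B)-1)}\right)$, as claimed.

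I do not expect a serious obstacle here; the only point requiring a little care is the counting of candidate submatrices and the observation that the probability a fixed candidate realizes a copy of $B$ is exactly $p^{w(B)}$ (since the representation relation only forces the designated cells to be $1$, and distinct candidates may give the same copy, but an upper bound on the count suffices). One should also double-check the degenerate cases $w(B) = 1$ and $s = d$, which are either trivial or excluded, and confirm that $p \le 1$ for all large $n$ so that $p$ is a valid probability. Everything else is the routine expectation computation and the choice of $p$ sketched above.
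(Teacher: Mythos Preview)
Your proposal is correct and follows essentially the same probabilistic deletion argument as the paper: generate a random $n^d$ matrix with each entry $1$ independently with probability $p$, delete one $1$-entry from every copy of $B$, bound the expected number of surviving ones by $pn^d - p^{w(B)}\prod_i \binom{n}{k_i}$, and optimize $p$. The only cosmetic difference is that the paper bounds $\prod_i \binom{n}{k_i}$ via Stirling while you use the simpler $\binom{n}{k_i}\le n^{k_i}$, which is equally sufficient for the stated $\Omega$-bound.
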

\begin{proof}
Let $A$ be a random matrix of side length $n$ such that each entry is a 1 with probability $p$, and each entry is chosen independently of others. 
The expected value of $w(A)$ is $n^d p$. 
We form $A'$ as follows: from every $k_1\times \cdots \times k_d$ submatrix of $A$, if that submatrix represents $B$, then replace a one entry with a 0 so that the submatrix then avoids $B$. 
We see that $A'$ avoids $B$.
Each submatrix represent $B$ with probability $p^{w(B)}$.
The expected number of ones in $A'$ is
\[E[w(A')] = n^dp - p^{w(B)}\left(\prod_{i = 1}^d  \binom{n}{k_i}\right)
\geq n^dp - \frac{(en)^{k_1 + \cdots + k_d}}{{k_1}^{k_1}\cdots {k_d}^{k_d}}p^{w(B)},\]
where we use the Stirling approximation for the inequality.
Choose
$p = \frac{1}{2} n^{-\frac{k_1 + \cdots + k_d - d}{w(B) - 1}}.$
There exists an event such that $w(A') \geq E[w(A')]$, so the result follows. 
\end{proof}

This proves a stronger condition of nonlinearity than the bounds shown by Crowdmath \cite{CrowdMath}.
\begin{corollary}
If $B$ is an $r\times c$ matrix with $w(B) > r+c - 1$, then $\ex{B}{n} = \Omega(n^{1 + \epsilon})$ for some $\epsilon > 0$.
\end{corollary}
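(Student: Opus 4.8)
The plan is to obtain the corollary as an immediate specialization of Lemma~\ref{general_probabilistic_matrix_bound} to the two-dimensional case. First I would observe that for $d = 2$ the multidimensional extremal function coincides with the ordinary matrix extremal function, i.e. $f(B, 2, n) = \ex{B}{n}$, since by definition both count the maximum number of $1$-entries in an $n \times n$ binary matrix that avoids $B$. Applying Lemma~\ref{general_probabilistic_matrix_bound} with $d = 2$, $k_1 = r$, and $k_2 = c$ then gives
\[
\ex{B}{n} = f(B, 2, n) = \Omega\!\left(n^{2 - \frac{r + c - 2}{w(B) - 1}}\right).
\]

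Next I would set $\epsilon = 1 - \frac{r + c - 2}{w(B) - 1}$, so that the exponent above equals $1 + \epsilon$, and it remains only to check that $\epsilon > 0$. Since $r, c \geq 1$ we have $r + c - 1 \geq 1$, so the hypothesis $w(B) > r + c - 1$ forces $w(B) - 1 \geq 1 > 0$ (which also legitimizes dividing by $w(B)-1$ in the exponent produced by the lemma) and $w(B) - 1 > r + c - 2$. Hence $\frac{r + c - 2}{w(B) - 1} < 1$, so $\epsilon > 0$ and $\ex{B}{n} = \Omega(n^{1 + \epsilon})$, as claimed.

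There is essentially no obstacle here beyond bookkeeping: the only points worth stating carefully are the identification $f(B, 2, n) = \ex{B}{n}$ and the elementary observation that the hypothesis $w(B) > r + c - 1$ is precisely what is needed to push the exponent strictly above $1$. In particular, when $w(B) = r + c - 1$ the bound degenerates to $\Omega(n)$, which is consistent with the fact that such matrices can be linear, and the strict inequality is what upgrades this to a genuine $\Omega(n^{1+\epsilon})$ lower bound.
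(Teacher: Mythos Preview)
Your proposal is correct and is exactly the intended argument: the paper states the corollary immediately after Lemma~\ref{general_probabilistic_matrix_bound} with no separate proof, and your specialization to $d=2$ together with the observation that $w(B) > r+c-1$ forces the exponent $2 - \frac{r+c-2}{w(B)-1}$ to exceed $1$ is precisely what is meant.
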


We can use the same method to bound the extremal function of all graphs.
\begin{theorem}
Let $G = ([k], E)$ be a graph. Then $\gex{G}{n} = \Omega(n^{2 - \frac{k - 2}{|E| - 1}}).$
\end{theorem}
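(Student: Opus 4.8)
The plan is to run the deletion (alteration) argument of Lemma~\ref{general_probabilistic_matrix_bound}, now with ordered graphs on $[n]$ in place of $d$-matrices. First I would take a random ordered graph $A$ on vertex set $[n]$ in which each of the $\binom{n}{2}$ pairs $\{i,j\}$ is an edge independently with probability $p$, to be fixed later; the expected number of edges of $A$ is then $\binom{n}{2}p$.

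The key point is that since $G$ is $2$-uniform, the containment relation of the definition above specializes to ordered-subgraph containment: an increasing injection $f:[k]\to[n]$ gives a copy of $G$ in $A$ exactly when $f(e)$ is an edge of $A$ for every $e\in E$ (here $f(e)\subset g(e)$ forces $f(e)=g(e)$ because both sets have size $2$). Since $[k]$ and $[n]$ carry fixed linear orders, increasing injections $[k]\to[n]$ correspond bijectively to $k$-subsets of $[n]$, so there are exactly $\binom{n}{k}$ potential copies of $G$, and a fixed one is present (all $|E|$ of its edges occur in $A$) with probability $p^{|E|}$; note that isolated vertices of $G$, if any, do not affect this probability. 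I would then delete one edge from each present copy to obtain $A'$, which avoids $G$ by construction. Writing $e(A')$ for the number of edges of $A'$, deleting at most one edge per present copy gives
\[
E[e(A')] \;\ge\; \binom{n}{2}p - \binom{n}{k}p^{|E|} \;\ge\; \tfrac14 n^2 p - n^k p^{|E|}
\]
for $n\ge 2$, using $\binom{n}{2}\ge n^2/4$ and $\binom{n}{k}\le n^k$ (one could instead use the Stirling bound exactly as in the lemma).

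Finally I would set $p = \alpha\, n^{-(k-2)/(|E|-1)}$, which is legitimate assuming $|E|\ge 2$ (so $k\ge 3$ and $p\le \alpha\le 1$; if $|E|\le 1$ the statement is vacuous). The exponent identity
\[
k - \frac{(k-2)|E|}{|E|-1} \;=\; \frac{2|E|-k}{|E|-1} \;=\; 2 - \frac{k-2}{|E|-1}
\]
shows that both terms on the right-hand side above have order $n^{2-(k-2)/(|E|-1)}$, so choosing $\alpha>0$ small enough that $\alpha^{|E|-1}\le \tfrac18$ makes the second term at most half the first and yields $E[e(A')] \ge \tfrac18\alpha\, n^{2-(k-2)/(|E|-1)}$. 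Hence some ordered graph on $[n]$ avoids $G$ and has $\Omega(n^{2-(k-2)/(|E|-1)})$ edges, which is the desired bound $\gex{G}{n}=\Omega(n^{2-(k-2)/(|E|-1)})$.

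There is no real obstacle here beyond routine bookkeeping; the only places that merit a careful sentence are the reduction of the general hypergraph containment definition to ordered-subgraph containment for $2$-uniform $G$ (so that the count $\binom{n}{k}$ of potential copies and the probability $p^{|E|}$ are correct, and isolated vertices are handled), and checking that the exponent arithmetic lands exactly on $2 - \frac{k-2}{|E|-1}$.
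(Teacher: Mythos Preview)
Your proposal is correct and is exactly the approach the paper intends: the paper does not give a separate proof here but simply says ``We can use the same method'' (i.e., the alteration/deletion argument of Lemma~\ref{general_probabilistic_matrix_bound}), and you have carried out precisely that adaptation to ordered graphs. Your handling of the containment definition in the $2$-uniform case, the count $\binom{n}{k}$ of potential copies, the choice of $p$, and the exponent arithmetic are all in order.
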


\begin{sloppypar}
Fox improved the probabilistic lower bound from \cite{TianGeneson}, showing that for almost all $d$-permutation matrices $P$ of length $k$, we have $f(P, d, n) = 2^{\Omega(n^{1/2})}n^{d-1}$.
Klazar and Marcus \cite{KlazarMarcus} observed that if any $d$-matrix $A$ avoids $P$, then the hypergraph associated with $A$ avoids the hypergraph associated with $P$.
We use their observation to obtain the following statement.
\end{sloppypar}

\begin{corollary}
For almost all $d$-permutation hypergraphs $P$ of length $k$, we have
\[\hyperex{i}{P}{n} = 2^{\Omega(k^{1/2})}n^{d-1}.\]
\end{corollary}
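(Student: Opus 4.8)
The plan is to transfer Fox's matrix lower bound through the matrix--to--hypergraph correspondence. Recall from the discussion above that Fox showed $f(P, d, n) = 2^{\Omega(k^{1/2})}n^{d-1}$ for almost all $d$-permutation matrices $P$ of length $k$. Since building a $d$-permutation matrix of length $k$ from $d-1$ permutations $\pi_1,\ldots,\pi_{d-1}$ on $[k]$ sets up a bijection between the $(k!)^{d-1}$ such matrices and the $d$-permutation hypergraphs of length $k$ (each such hypergraph is associated with a unique $d$-permutation matrix and vice versa, as noted in the Definitions), it suffices to prove $\hyperex{i}{H}{n} = 2^{\Omega(k^{1/2})}n^{d-1}$ whenever $H$ is the hypergraph associated with a $d$-permutation matrix $P$ of length $k$ satisfying Fox's bound.

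So fix such a $P$, let $H$ be its associated hypergraph, set $m = \lfloor n/d\rfloor$, and let $A$ be a $d$-matrix of side length $m$ that avoids $P$ and has $w(A) = f(P, d, m)$ one-entries. Let $G$ be the hypergraph on $[dm]$ associated with $A$. By the observation of Klazar and Marcus recalled above, since $A$ avoids $P$ the hypergraph $G$ avoids $H$. Distinct $1$-entries of $A$ produce distinct edges of size $d$ in $G$, so the weight of $G$ equals $d\cdot w(A) = d\cdot f(P, d, m)$. Adjoining $n - dm \ge 0$ isolated vertices turns $G$ into a hypergraph on $[n]$; this adds no edges and hence cannot create a copy of $H$, so the enlarged hypergraph still avoids $H$ and has weight $d\cdot f(P, d, m)$.

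Therefore $\hyperex{i}{H}{n} \ge d\cdot f(P, d, \lfloor n/d\rfloor) = d\cdot 2^{\Omega(k^{1/2})}\lfloor n/d\rfloor^{d-1} = 2^{\Omega(k^{1/2})}n^{d-1}$, where in the last step the factors of $d$ are absorbed into the implied constant because $d$ is fixed. Running this over almost all $P$ --- equivalently, over almost all $d$-permutation hypergraphs $H$ of length $k$, by the bijection above --- yields the corollary.

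There is essentially no hard step here, since both ingredients --- Fox's matrix lower bound and the Klazar--Marcus avoidance transfer --- are already available. The only points that need care are \emph{(i)} checking that the notion of ``almost all'' is preserved by the matrix/hypergraph bijection, which is immediate since that bijection fixes the parameter $k$, and \emph{(ii)} the routine bookkeeping of the $n$ versus $dm$ vertex discrepancy and the factor-$d$ weight inflation, both of which disappear into the constant because $d$ is treated as a fixed parameter throughout.
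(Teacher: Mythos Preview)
Your proof is correct and follows exactly the route sketched in the paper: invoke Fox's matrix lower bound for almost all $d$-permutation matrices, then transfer it to hypergraphs via the Klazar--Marcus observation that if $A$ avoids $P$ then the hypergraph associated with $A$ avoids the hypergraph associated with $P$. The paper leaves the corollary as an immediate consequence of that observation, whereas you have additionally spelled out the bijection preserving ``almost all'' and the $\lfloor n/d\rfloor$ bookkeeping; these are welcome details but do not constitute a different approach.
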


Combining our upper and lower bounds shows that for almost all $d$-permutation hypergraphs $P$ of length $k$, we have $\hyperex{i}{P}{n} = 2^{k^{\Theta(1)}}n^{d-1}$, indicating that our bounds are tight.
\section{Conclusion}
We reduced the calculation of the extremal function a class of bipartite graphs to the calculation of their associated matrices by showing an equivalence between the two problems.
We bounded extremal function of $d$-permutation $d$-partite hypergraphs in terms of the extremal function of their associated $d$-matrices.
We also obtained improved lower bounds for the extremal function of all $d$-matrices and graphs with the probabilistic method.

\begin{sloppypar}
One possible future direction for this research would be to show that $f(P, d, n) = 2^{O(k^{2/3 + o(1)})}n^{d-1}$ for almost all $d$-permutation matrices of length $k$.
Using a similar method as the one used by \cite{CibKyncl}, it seems likely that their argument can generalize to $d > 2$.
This would also imply that $\hyperex{i}{Q}{n} = 2^{O(k^{2/3 + o(1)})}n^{d-1}$ for almost all $d$-permutation hypergraphs $Q$ of length $k$.
\end{sloppypar}

Another possible direction would be to apply our results to the extremal function of partitions, studied in \cite{GunbyPalv}.
Gunby and  P{\'a}lv{\"o}lgyi used the hypergraph extremal function to find doubly exponential upper bounds on the number of partitions that avoids a given pattern.
We can use our improved result on the hypergraph extremal function to sharpen the bounds on the partition extremal function.
\section{Acknowledgements}
The author would like to thank Dr. Jesse Geneson for his valuable advice and guidance through the duration of this project. The research of the author was also supported by the Department of Mathematics, MIT through PRIMES-USA 2017.

\bibliography{references} 

\begin{thebibliography}{10}

\bibitem{GenesonThesis}
J.~Geneson, {\em Bounds on extremal functions of forbidden patterns}.
\newblock PhD thesis, Massachusetts Institute of Technology, 2015.

\bibitem{GunbyPalv}
B.~Gunby and D.~P{\'a}lv{\"o}lgyi, ``Asymptotics of pattern avoidance in the
  klazar set partition and permutation-tuple settings,'' {\em arXiv preprint
  arXiv:1707.07809}, 2017.

\bibitem{kitaev2011patterns}
S.~Kitaev, {\em Patterns in permutations and words}.
\newblock Springer Science \& Business Media, 2011.

\bibitem{knuth1998art}
D.~E. Knuth, {\em The art of computer programming: sorting and searching},
  vol.~3.
\newblock Pearson Education, 1998.

\bibitem{bona1997}
M.~B{\'o}na, ``Permutations avoiding certain patterns: the case of length 4 and
  some generalizations,'' {\em Discrete Mathematics}, vol.~175, no.~1-3,
  pp.~55--67, 1997.

\bibitem{furedi1992davenport}
Z.~F{\"u}redi and P.~Hajnal, ``Davenport-schinzel theory of matrices,'' {\em
  Discrete Mathematics}, vol.~103, no.~3, pp.~233--251, 1992.

\bibitem{klazar2000furedi}
M.~Klazar, ``The f{\"u}redi-hajnal conjecture implies the stanley-wilf
  conjecture,'' in {\em Formal power series and algebraic combinatorics},
  pp.~250--255, Springer, 2000.

\bibitem{cibulka2009constants}
J.~Cibulka, ``On constants in the f{\"u}redi--hajnal and the stanley--wilf
  conjecture,'' {\em Journal of Combinatorial Theory, Series A}, vol.~116,
  no.~2, pp.~290--302, 2009.

\bibitem{MarcTard}
A.~Marcus and G.~Tardos, ``Excluded permutation matrices and the stanley--wilf
  conjecture,'' {\em Journal of Combinatorial Theory, Series A}, vol.~107,
  no.~1, pp.~153--160, 2004.

\bibitem{fox2013stanley}
J.~Fox, ``Stanley-wilf limits are typically exponential,'' {\em arXiv preprint
  arXiv:1310.8378}, 2013.

\bibitem{KlazarMarcus}
M.~Klazar and A.~Marcus, ``Extensions of the linear bound in the
  f{\"u}redi--hajnal conjecture,'' {\em Advances in Applied Mathematics},
  vol.~38, no.~2, pp.~258--266, 2007.

\bibitem{CibKyncl}
J.~Cibulka and J.~Kyn{\v{c}}l, ``Better upper bounds on the f{\"u}redi-hajnal
  limits of permutations,'' in {\em Proceedings of the Twenty-Eighth Annual
  ACM-SIAM Symposium on Discrete Algorithms}, pp.~2280--2293, SIAM, 2017.

\bibitem{TianGeneson}
J.~T. Geneson and P.~M. Tian, ``Extremal functions of forbidden
  multidimensional matrices,'' {\em Discrete Mathematics}, vol.~340, no.~12,
  pp.~2769--2781, 2017.

\bibitem{MarcThesis}
A.~W. Marcus, {\em New combinatorial techniques for nonlinear orders}.
\newblock PhD thesis, Georgia Institute of Technology, 2008.

\bibitem{WeidertThesis}
C.~Weidert, {\em Extremal problems in ordered graphs}.
\newblock PhD thesis, Simon Fraser University, 2009.

\bibitem{steingrimsson2013some}
E.~Steingr{\'\i}msson, ``Some open problems on permutation patterns: Surveys in
  combinatorics,'' {\em London Math Society Lecture Note Series}, pp.~239--263,
  2013.

\bibitem{Mitchell}
J.~Mitchell, ``Shortest rectilinear paths among obstacles,'' tech. rep.,
  Cornell University Operations Research and Industrial Engineering, 1987.

\bibitem{Bienstock}
D.~Bienstock and E.~Gy{\"o}ri, ``An extremal problem on sparse 0-1 matrices,''
  {\em SIAM Journal on Discrete Mathematics}, vol.~4, no.~1, pp.~17--27, 1991.

\bibitem{yang1992rectilinear}
C.-D. Yang, {\em On Rectilinear Paths Among Rectilinear Obstacles}.
\newblock PhD thesis, Northwestern University, Evanston, IL, USA, 1992.
\newblock UMI Order No. GAX92-30021.

\bibitem{furedi1990}
Z.~F{\"u}redi, ``The maximum number of unit distances in a convex n-gon,'' {\em
  Journal of Combinatorial Theory, Series A}, vol.~55, no.~2, pp.~316--320,
  1990.

\bibitem{Erdos}
P.~Erd{\"o}s and L.~Moser, ``A problem on tournaments,'' {\em Canad. Math.
  Bull}, vol.~7, no.~3, 1964.

\bibitem{aggarwal2015unit}
A.~Aggarwal, ``On unit distances in a convex polygon,'' {\em Discrete
  Mathematics}, vol.~338, no.~3, pp.~88--92, 2015.

\bibitem{sharir1995davenport}
M.~Sharir and P.~K. Agarwal, {\em Davenport-Schinzel sequences and their
  geometric applications}.
\newblock Cambridge university press, 1995.

\bibitem{CrowdMath}
P.~CrowdMath, ``Bounds on parameters of minimally non-linear patterns,'' {\em
  arXiv preprint arXiv:1701.00706}, 2016.

\end{thebibliography}
\bibliographystyle{ieeetr}

\end{document}